\title{On a categorical Boson-Fermion correspondence}
\author{Sabin Cautis}
\email{cautis@math.ubc.ca}
\address{Department of Mathematics\\ University of British Columbia \\ Vancouver, Canada}
\author{Joshua Sussan}
\email{jsussan@mec.cuny.edu}
\address{Department of Mathematics\\ CUNY Medgar Evers \\ New York, United States}
\newtheorem{Theorem}{Theorem}[section]
\newtheorem{prop}[Theorem]{Proposition}
\newtheorem{conj}[Theorem]{Conjecture}
\newtheorem{theorem}[Theorem]{Theorem}
\newtheorem{corollary}[Theorem]{Corollary} 
\newtheorem{lemma}[Theorem]{Lemma}
\newtheorem{remark}[Theorem]{Remark}
 \newcommand{\Se}{{\mathbb{S}}}
 \newcommand{\Cl}{{\mathcal{C}\ell}}
\begin{document} 
\setcounter{tocdepth}{1}
\baselineskip 14pt

\renewcommand{\mod}{{\operatorname{-mod}}}

\def\A{{\sf{A}}}
\def\Cliff{{\sf{Cliff}}}
\def\T{{\sf{\Sigma}}}
\def\Cone{{\rm{Cone}}}
\def\v{{\sf v}}
\def\1{{\bf 1}}
\def\V{{\sf V}}
\def\k{\Bbbk}
\def\P{\mathsf{P}}
\def\Q{\mathsf{Q}}
\def\X{\mathsf{X}}
\def\sl{{\mathfrak{sl}}}

\def\Kom{{\sf Kom}}
\def\End{{\rm{End}}}
\def\id{{\rm{id}}}
\def\Id{{\rm{id}}}
\def\la{\langle}
\def\ra{\rangle}
\def\l{\lambda}
\def\h{\mathfrak{h}}
\def\th{{\mathfrak{h}}^t}
\def\cl{{\rm{Cl}}}
\def\tcl{{\widetilde{{\rm{Cl}}}}}
\def\H{\mathcal{H}}
\def\tH{{\mathcal{H}}^t}
\def\adj{{{adj}}} 
\def\tr{\text{tr}}
\def\Z{\mathbb Z}
\def\R{\mathbb R}
\def\N{\mathbb N} 
\def\o{\otimes}
\def\lra{\longrightarrow}
\def\Hom{\mathrm{Hom}}
\def\HOM{\mathrm{HOM}}
\def\RHom{\mathrm{RHom}}
\def\mc{\mathcal}
\def\mf{\mathfrak} 
\def\Ext{\mathrm{Ext}}
\def\Ind{\mathrm{Ind}}
\def\Res{\mathrm{Res}}
\def\soc{\mathrm{soc}}
\def\Kom{{\mathsf{Kom}}}
\def\Seq{\mathrm{Seq}}
\def\triv{{\rm{triv}}}
\def\C{{\sf{C}}}

\def\shuffle{\,\raise 1pt\hbox{$\scriptscriptstyle\cup{\mskip
               -4mu}\cup$}\,}
\newcommand{\define}{\stackrel{\mbox{\scriptsize{def}}}{=}}

\def\drawing#1{\begin{center}\epsfig{file=#1}\end{center}}

 \def\yesnocases#1#2#3#4{\left\{
\begin{array}{ll} #1 & #2 \\ #3 & #4
\end{array} \right. }

\begin{abstract} 
We propose a categorical version of the Boson-Fermion correspondence and its twisted version. One can view it as a relative of the Frenkel-Kac-Segal construction of quantum affine algebras. 
\end{abstract}

\maketitle
\tableofcontents 

\section{Introduction}

The Boson-Fermion correspondence can be interpreted as a relationship between a particular Heisenberg algebra $\h$ and an associated Clifford algebra $\cl$.  Its mathematical formulation was introduced in \cite{F}. It gives a recipe for constructing an action of $\cl$ given an action of $\h$ (the precise construction is given in section \ref{sec:corr}, Theorem \ref{thm:A}). 

In this paper we propose a categorical version of this result. The role of $\h$ is now played by the monoidal category $\H$ introduced by Khovanov \cite{Kh1}. This category contains not only the generators of $\h$ but also a set of natural transformations between them. The categorical action of $\cl$ is now given by certain complexes in $\H$ (see equations (\ref{eq:cpxC1}) and (\ref{eq:cpxC2}) as well as Conjecture \ref{conj:1}). The extra structure of natural transformations in $\H$ is used to define these complexes. 

There is a close analogy between the Boson-Fermion correspondence and the Frenkel-Kac-Segal (FKS) construction \cite{FK,Seg}. In both cases one starts with a Heisenberg algebra and then defines certain vertex operators. In the first case these operators induce an action of the Clifford algebra whereas in the second case they induce an action of a (quantum) affine algebra. 

In \cite{CL2} we categorified the FKS construction. More precisely, starting with the Heisenberg category studied in \cite{CL1} we defined certain complexes (categorical vertex operators) and showed that these induce categorical actions of quantum affine algebras. The conjectures in this paper are analogous. Instead of the Heisenberg from \cite{CL1} we use $\H$ from \cite{Kh1}. Then we write down certain complexes and conjecture they should give an action of a Clifford algebra. Although this action is only conjectural, the proof should follow along the same lines as that from \cite{CL2}. 

In the second part of the paper we discuss the twisted version of the Boson-Fermion correspondence. The category $\H$ is replaced by $\tH$. Since $\tH$ was not previously studied we discuss it in a little more detail (sections \ref{sec:tH}, \ref{sec:sergeev}, \ref{sec:fock}). This story is the twisted version of \cite{Kh1}. We then define again certain complexes and conjecture that they induce an action of a (cover of the) twisted Clifford algebra $\tcl^t$. This is a categorical analogue of the twisted Boson-Fermion correspondence. 

In \cite{CLS} we studied certain complexes $\T_i$ living in the Heisenberg category from \cite{CL1} which turn out to satisfy the braid relations. These complexes have an analogue $\T$ which makes sense in the context of the category $\H$. In section \ref{sec:projectors} we define $\T$ and conjecture that it is an idempotent. This discussion is not directly related to the Boson-Fermion correspondence and section \ref{sec:projectors} can be read independently of the others. 

A closely related article was posted by Frenkel, Penkov and Serganova \cite{FPS}. 
They study complexes of functors on a carefully chosen subcategory of representations of $\mathfrak{sl}_{\infty}$.
In particular, the complexes $X_a$ and $X_a^*$ from \cite{FPS} correspond to $\Psi_i$ and $\Psi_i^*$ in this paper. One difference between their setup and ours is that they do not assume biadjointness in their Heisenberg generators (functors such as ${\mathcal E}_n$ and ${\mathcal E}_n^*$ in \cite{FPS}). This means that they have adjunction maps in one direction but not the other. Subsequently one can define complexes (categorical vertex operators) only in one direction. 

The lack of biadjointness in \cite{FPS} has the advantage that their categories are less semi-simple. One would expect, or at least hope, that our conjectures (in particular Conjecture \ref{conj:1}) should have a direct analogue in their work. 

\noindent {\bf Acknowledgments:}
The authors would like to thank Mikhail Khovanov and Yin Tian for helpful discussions and Alistair Savage for pointing out a missing relation in Proposition \ref{prop:3} in an earlier version of the paper. S.C. was supported by NSF grant DMS-1332847 and both authors received support from the Templeton foundation. 

\section{The Boson-Fermion correspondence}

Throughout this paper we will work over an arbitrary field $\k$ of characteristic zero. The Boson-Fermion correspondence is a relationship between two algebras: a Heisenberg algebra $\h$ and a Clifford algebra $\cl$. 

\subsection{Heisenberg algebra $\h$}

In general, one can associate to any $\Z$-lattice a Heisenberg algebra. In this case we are interested in the simplest Heisenberg algebra, namely that associated with $\Z$ where the inner product is $\la 1,1 \ra = 1$. This is an associative unital $\k$-algebra, denoted $\h$, with generators $h_n$ for $n \in \Z \setminus \{0\}$ and relations $[h_m,h_n] = m \delta_{m,-n}$. 

Another way to represent $\h$ is as follows ({\it c.f.} \cite{Kh1}). For $m \ge 0$ define $p^{(m)}$ and $q^{(m)}$ as follows:
$$\sum_{m \in \N} p^{(m)} z^m = \exp \left( \sum_{m \in \N} \frac{h_{-m}}{m} z^m \right) \ \ \text{ and }  \ \ \sum_{m \in \N} q^{(m)} z^m = \exp \left( \sum_{m \in \N} \frac{h_{m}}{m} z^m \right).$$
By convention we take $p^{(m)}=q^{(m)}=0$ if $m < 0$. Note that $p^{(0)} = q^{(0)} = 1$. 

\begin{prop}\label{prop:1}
\label{heisrel1}
The Heisenberg algebra $\h$ is generated by $p^{(m)}, q^{(m)}$, $m \in \N$ with relations
\begin{enumerate}
\item $ p^{(n)} p^{(m)} = p^{(m)} p^{(n)} $ and $ q^{(n)} q^{(m)} = q^{(m)} q^{(n)} $ for any $m,n$,
\item $ q^{(n)} p^{(m)} = \sum_{k \ge 0} p^{(m-k)} q^{(n-k)}$.
\end{enumerate}
\end{prop}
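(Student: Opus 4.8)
The plan is to work with generating functions and exploit the fact that the $p^{(m)}$ and $q^{(m)}$ are, by definition, the Schur-polynomial-like entries of $\exp(\sum_m \frac{h_{-m}}{m} z^m)$ and $\exp(\sum_m \frac{h_m}{m} z^m)$ respectively. Introduce the formal generating series $p(z) = \sum_{m \geq 0} p^{(m)} z^m$ and $q(w) = \sum_{m \geq 0} q^{(m)} w^m$. First I would verify that $p^{(m)}, q^{(m)}$ together with the relations (1)--(2) generate $\h$: since $p^{(1)} = h_{-1}$ and $q^{(1)} = h_1$, and the higher $h_{\pm m}$ are recovered from the $p^{(k)}, q^{(k)}$ by inverting the exponential (Newton's identities), the subalgebra generated by the $p^{(m)}, q^{(m)}$ is all of $\h$. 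So the content is to show that relations (1) and (2) hold and that they are a complete set of relations.

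Relation (1) is immediate: the $h_{-m}$ commute among themselves, so the exponential series in the $h_{-m}$ has commuting coefficients, and likewise for the $h_m$. For relation (2), the key computation is the commutator of the two exponentials. Using the Baker--Campbell--Hausdorff formula in the form $e^A e^B = e^{[A,B]} e^B e^A$ when $[A,B]$ is central, with $A = \sum_{m} \frac{h_{-m}}{m} z^m$ and $B = \sum_{n} \frac{h_n}{n} w^n$, the relation $[h_{-m}, h_n] = -m\delta_{m,n}$ gives
$$[A,B] = \sum_{m,n} \frac{1}{mn} z^m w^n [h_{-m}, h_n] = -\sum_{m \geq 1} \frac{1}{m} (zw)^m = \log(1 - zw).$$
Hence $q(w) p(z) = e^B e^A = e^{-[A,B]} e^A e^B = \frac{1}{1-zw}\, p(z) q(w) = \sum_{k \geq 0} (zw)^k p(z) q(w)$. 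Extracting the coefficient of $z^m w^n$ on both sides yields exactly $q^{(n)} p^{(m)} = \sum_{k \geq 0} p^{(m-k)} q^{(n-k)}$, which is relation (2).

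It remains to check that (1)--(2) are a \emph{defining} set of relations, i.e. that the abstract algebra $\h'$ presented by generators $p^{(m)}, q^{(m)}$ and relations (1)--(2) is isomorphic to $\h$, not merely a quotient. There is a surjection $\h' \twoheadrightarrow \h$ by the above; to see it is injective one can exhibit a faithful representation of $\h'$ — for instance the Fock space $\k[x_1, x_2, \dots]$ on which $p^{(m)}$ acts as multiplication by the $m$-th complete homogeneous symmetric function (in a suitable normalization) and $q^{(m)}$ as the corresponding differential operator — and check that relations (1)--(2) are satisfied there and that the action is faithful, so that $\dim \h' $ cannot be smaller than $\dim \h$. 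Alternatively, one shows directly that every element of $\h'$ can be written in a normal form $\sum (\text{monomial in } p\text{'s}) \cdot (\text{monomial in } q\text{'s})$ using relation (2) to move all $q$'s to the right, and that these normal-form monomials are linearly independent by comparing with the PBW basis of $\h$ in the $h_{\pm m}$. I expect this last point — confirming completeness of the relations rather than just their validity — to be the main obstacle, since the BCH computation itself is routine; the cleanest route is almost certainly the faithful Fock space representation.
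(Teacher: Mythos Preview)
Your argument is correct. The BCH computation $q(w)p(z) = \frac{1}{1-zw}\,p(z)q(w)$ is the standard way to obtain relation (2), and your discussion of completeness via normal form or the Fock representation is the right idea.

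The paper itself does not give a proof: it simply remarks that relation (1) is clear and that relation (2) appears (implicitly) in Khovanov's paper \cite{Kh1}. So your write-up is actually more self-contained than what the paper provides. The only comment is that the paper treats the proposition as a known presentation and does not separately address the ``completeness of relations'' issue you raise; in practice this is folklore (the $p^{(m)},q^{(m)}$ are the complete symmetric functions in the power sums $h_{\pm m}/m$, and the normal-form/PBW argument you sketch goes through without difficulty), so you should not expect a serious obstacle there.
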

In a similar way one can also define generators $p^{(1^m)}$ and $q^{(1^m)}$ by adding a minus sign in the expressions above, namely 
$$\sum_{m \in \N} (-1)^m p^{(1^m)} z^m = \exp \left( - \sum_{m \in \N} \frac{h_{-m}}{m} z^m \right) \ \ \text{ and }  \ \  \sum_{m \in \N} (-1)^m q^{(1^m)} z^m = \exp \left( \sum_{m \in \N} - \frac{h_{m}}{m} z^m \right).$$

\begin{prop}\label{prop:2}
\label{heisrel2}
The following relations hold inside $\h$:
\begin{enumerate}
\item all $p$'s commute among each other, for example $p^{(m)} p^{(1^n)} = p^{(1^n)} p^{(m)}$,\item all $q$'s commute among each other, 
\item $q^{(n)} p^{(1^m)} = p^{(1^m)} q^{(n)} + p^{(1^{m-1})} q^{(n-1)}$ and $q^{(1^n)} p^{(m)} = p^{(m)} q^{(1^n)} + p^{(m-1)} q^{(1^{n-1})}$. 
\end{enumerate}
\end{prop}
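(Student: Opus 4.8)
The plan is to derive Proposition~\ref{prop:2} from Proposition~\ref{prop:1} by a generating-function manipulation, exploiting the fact that $p^{(1^m)}$ and $q^{(1^m)}$ are obtained from $p^{(m)}$ and $q^{(m)}$ by the substitution $h_{\pm m} \mapsto -h_{\pm m}$ (equivalently $z \mapsto -z$ after absorbing the sign). Concretely, set $P(z) = \sum_m p^{(m)} z^m$, $\bar P(z) = \sum_m (-1)^m p^{(1^m)} z^m$, and similarly $Q(z)$, $\bar Q(z)$. Then $P(z) = \exp(\sum_m \frac{h_{-m}}{m} z^m)$ and $\bar P(z) = \exp(-\sum_m \frac{h_{-m}}{m} z^m)$, so $\bar P(z) = P(z)^{-1}$ as a power series in $z$ with coefficients in $\h$; likewise $\bar Q(z) = Q(z)^{-1}$. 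This immediately gives part~(1) and part~(2): since all the $h_{-m}$ commute among themselves, any two series built out of them commute, so $P(z)$ commutes with $\bar P(w)$ coefficient-wise, which is exactly $p^{(m)} p^{(1^n)} = p^{(1^n)} p^{(m)}$; the same argument with the $h_m$'s handles the $q$'s.

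For part~(3) the cleanest route is to first package Proposition~\ref{prop:1}(2) as a generating-function identity. The relation $q^{(n)} p^{(m)} = \sum_{k \ge 0} p^{(m-k)} q^{(n-k)}$ is equivalent to
\begin{equation*}
Q(w) P(z) = \frac{1}{1 - zw}\, P(z) Q(w),
\end{equation*}
since expanding $\frac{1}{1-zw} = \sum_{k \ge 0} (zw)^k$ and matching the coefficient of $z^m w^n$ reproduces the stated sum. (This is the standard commutation relation $Q(w)P(z) = \exp([\,\cdot\,,\cdot\,])\, P(z)Q(w)$ coming from $[h_m, h_{-m}] = m$.) Now I would substitute $P(z) = \bar P(z)^{-1}$ into this identity to get $Q(w)\bar P(z)^{-1} = \frac{1}{1-zw} \bar P(z)^{-1} Q(w)$, and rearrange to
\begin{equation*}
\bar P(z)\, Q(w) = (1 - zw)\, Q(w)\, \bar P(z).
\end{equation*}
Extracting the coefficient of $z^m w^n$ on both sides (remembering the $(-1)^m$ normalization in $\bar P$) yields $q^{(n)} p^{(1^m)} = p^{(1^m)} q^{(n)} + p^{(1^{m-1})} q^{(n-1)}$, which is the first half of~(3). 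The second half, $q^{(1^n)} p^{(m)} = p^{(m)} q^{(1^n)} + p^{(m-1)} q^{(1^{n-1})}$, follows symmetrically by instead writing $Q(w) = \bar Q(w)^{-1}$ in the same identity.

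The main obstacle, such as it is, is bookkeeping rather than conceptual: one must be careful that the series identities hold in the appropriate completion of $\h$ (formal power series in $z, w$ with coefficients in $\h$), that inverting $P(z)$ is legitimate because its constant term is $p^{(0)} = 1$, and that the substitution $P(z) \leftrightarrow \bar P(z)^{-1}$ is compatible with the noncommutativity — i.e. it is an honest identity of series, not merely a formal manipulation of exponents, which is fine here since $\bar P(z)$ is literally defined as that inverse exponential. An alternative, more pedestrian proof avoids generating functions entirely: induct on $m$, using the base case $p^{(1^0)} = 1$ (trivial), the case $p^{(1^1)} = p^{(1)}$ (which is part of Proposition~\ref{prop:1}), and the recursion $\sum_{j} (-1)^j p^{(1^j)} p^{(m-j)} = \delta_{m,0}$ coming from $\bar P(z) P(z) = 1$, combined with Proposition~\ref{prop:1}(2). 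I would present the generating-function argument as the primary proof since it makes the structure transparent and handles all four identities uniformly.
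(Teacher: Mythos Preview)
Your approach is sound and considerably more explicit than the paper's, which does not actually prove Proposition~\ref{prop:2} but simply remarks that the relations are either clear or appear (explicitly or implicitly) in \cite{Kh1}. Your generating-function argument is self-contained: it packages Proposition~\ref{prop:1}(2) as $Q(w)P(z) = (1-zw)^{-1} P(z)Q(w)$, then uses $\bar P(z) = P(z)^{-1}$ to obtain the mixed commutation relation. This makes the structure transparent and proves all parts uniformly, whereas the paper defers to the literature.

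One slip to fix: the rearrangement of $Q(w)\bar P(z)^{-1} = \frac{1}{1-zw}\bar P(z)^{-1}Q(w)$ should read
\[
Q(w)\,\bar P(z) \;=\; (1-zw)\,\bar P(z)\,Q(w),
\]
not $\bar P(z)Q(w) = (1-zw)Q(w)\bar P(z)$ as you wrote (conjugating by $\bar P(z)$ on both sides gives the former). Extracting the coefficient of $z^m w^n$ from the corrected identity does indeed give $q^{(n)} p^{(1^m)} = p^{(1^m)} q^{(n)} + p^{(1^{m-1})} q^{(n-1)}$, so your stated conclusion is right; only the intermediate display has the two sides swapped. With that correction the proof is complete.
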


The relations in the Propositions above are either clear (such as the first relation in Prop. \ref{prop:1}) or appear in \cite{Kh1} (either explicitly such as the last relation in Prop. \ref{prop:2} \cite[Prop. 1]{Kh1} or implicitly such as the last relation in Prop. \ref{prop:1}).

The algebra $\h$ has an irreducible representation $V_{Fock} := {\rm Ind}_{\h^+}^{\h}({\triv})$ where $\h^+ \subset \h$ is the subalgebra generated by $q^{(m)}$'s and $\triv$ is its one dimensional representation where $q^{(m)}$ acts by zero if $m > 0$. Note that $V_{Fock}$ is spanned by products of the form $p^{(m_1)} p^{(m_2)} \dots p^{(m_k)}$.  

\subsection{The correspondence}\label{sec:corr}

The Clifford algebra $\cl$ has generators $\psi_i$ and $\psi_i^*$ ($i \in \Z$) and relations 
\begin{equation}\label{eq:cl1}
\psi_i \psi_j + \psi_j \psi_i = 0, \ \ \psi_i^* \psi_j^* + \psi_j^* \psi_i^* = 0 \ \ \text{ and } \ \ \psi_i \psi_j^* + \psi_j^* \psi_i = \delta_{i,j}.
\end{equation}
Consider the vector space $V := \k[e^{\pm \alpha}] \otimes_\k V_{Fock}$.

\begin{theorem}[Boson-Fermion correspondence]\label{thm:A}
One can define an action of $\cl$ on $V$ by 
$$\psi_i(e^{n \alpha} \otimes v) := e^{(n+1)\alpha} \otimes C^-_{i+n}(v) \ \ \text{ and } \ \ \psi_i^*(e^{(n+1) \alpha} \otimes v) := e^{n \alpha} \otimes C^+_{i+n}(v)$$
where $C_i^-$ and $C_i^+$ are defined by 
\begin{align}
\label{eq:C1} C_i^- &= \begin{cases}
\sum_{k \ge 0} (-1)^{k} p^{(k)} q^{(1^{i+k})} & \text{ if } i \ge 0 \\
\sum_{k \ge 0} (-1)^{i+k} p^{(-i+k)} q^{(1^k)} & \text{ if } i \le 0
\end{cases} \\
\label{eq:C2} C_i^+ &= \begin{cases}
\sum_{k \ge 0} (-1)^{k} p^{(1^{i+k})} q^{(k)} & \text{ if } i \ge 0 \\
\sum_{k \ge 0} (-1)^{i+k} p^{(1^k)} q^{(-i+k)} & \text{ if } i \le 0.
\end{cases} 
\end{align}
\end{theorem}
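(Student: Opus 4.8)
The plan is to verify the Clifford relations \eqref{eq:cl1} directly by computing inside $\h$ (more precisely, in a suitable completion), using the presentation of $\h$ in terms of the $p$'s and $q$'s from Propositions \ref{prop:1} and \ref{prop:2}. The operators $\psi_i$ and $\psi_i^*$ are built from the operators $C^\mp_i$ tensored with the shift $e^{n\alpha} \mapsto e^{(n\pm 1)\alpha}$ on the lattice factor $\k[e^{\pm\alpha}]$. Because a single $\psi$ shifts the lattice degree up by one and a single $\psi^*$ shifts it down by one, each of the three relations reduces to an identity purely among the $C^\pm$ operators acting on $V_{Fock}$, with the appropriate sign bookkeeping coming from how we order the shifts (this is where the $e^{\pm\alpha}$ factor does its job of supplying anticommutativity). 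Concretely, I would first record the three reductions: $\psi_i\psi_j + \psi_j\psi_i = 0$ becomes a statement about $C^-_{i+n+1}C^-_{j+n} + (\text{swap})$; $\psi_i^*\psi_j^* + \psi_j^*\psi_i^* = 0$ becomes the analogous statement for the $C^+$'s; and $\psi_i\psi_j^* + \psi_j^*\psi_i = \delta_{i,j}$ becomes a statement about $C^-_{i+n}C^+_{j+n} + C^+_{j+n+1}C^-_{i+n+1} = \delta_{i,j}\cdot\mathrm{id}$.

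**Next I would** set up generating functions to make the computations tractable. Introduce $C^-(z) := \sum_{i\in\Z} C^-_i z^i$ and $C^+(w) := \sum_{i\in\Z} C^+_i w^i$; using the definitions \eqref{eq:C1}, \eqref{eq:C2} together with the generating-function definitions of $p^{(m)}, q^{(m)}, p^{(1^m)}, q^{(1^m)}$ in terms of the $h_n$, one rewrites $C^-(z)$ and $C^+(w)$ as normal-ordered products of exponentials in the $h_{-m}$'s (creation) and $h_m$'s (annihilation). The commutation relation $[h_m,h_{-m}] = m$ then lets me move annihilation exponentials past creation exponentials at the cost of an explicit scalar factor; the standard computation yields operator product expansions of the shape $C^-(z)C^-(z') \sim (z-z')\,(\text{normal ordered})$, $C^+(w)C^+(w') \sim (w-w')(\cdots)$, and $C^-(z)C^+(w) \sim \tfrac{1}{z-w}(\cdots)$ — exactly the vertex-operator OPEs underlying the classical correspondence. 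Antisymmetry of the first two and the residue $\delta$-function in the third then give the Clifford relations after incorporating the lattice shifts (which contribute the cocycle signs turning a symmetric product into an antisymmetric one). As an alternative to generating functions, one can also argue by induction using only the finite recursions in Propositions \ref{prop:1} and \ref{prop:2}, checking the coefficient of each monomial $p^{(m_1)}\cdots p^{(m_k)}$ in $V_{Fock}$.

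**The main obstacle** I expect is the bookkeeping of signs and of the lattice-shift cocycle: the operators $C^\pm_i$ by themselves satisfy \emph{symmetric} rather than antisymmetric relations, and it is precisely the interaction with $\k[e^{\pm\alpha}]$ — the fact that $\psi_i$ first applies $C^-_{i+n}$ and \emph{then} shifts $e^{n\alpha}\mapsto e^{(n+1)\alpha}$, while $\psi_j$ applied afterward sees the shifted index $n+1$ — that converts symmetry into antisymmetry. Getting the indices $i+n$ versus $i+n+1$ to line up correctly across the two orders of composition, and confirming that the cross term in the third relation collapses to exactly $\delta_{i,j}$ (not $\delta_{i,j}$ times some nontrivial Fock-space operator), is the delicate point; everything else is a routine, if lengthy, manipulation of the relations already available from Propositions \ref{prop:1} and \ref{prop:2}. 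I would therefore devote the bulk of the write-up to a careful treatment of the mixed relation $\psi_i\psi_j^* + \psi_j^*\psi_i = \delta_{i,j}$, split into the cases $i,j \ge 0$, $i,j \le 0$, and mixed signs, and treat the two purely-$\psi$ and purely-$\psi^*$ relations more briefly by the same method.
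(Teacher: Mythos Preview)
Your proposal is sound and follows the standard vertex-operator/OPE approach to the Boson--Fermion correspondence. The paper itself does not give a proof at all: its entire argument for Theorem~\ref{thm:A} is the line ``See~\cite{F} or~\cite[section 2]{An}.'' So there is nothing to compare against in the paper proper; what you have outlined is essentially the content of those cited references (Frenkel's original construction and Anguelova's reformulation), carried out in the $p^{(m)}, q^{(m)}, p^{(1^m)}, q^{(1^m)}$ generators rather than directly in the $h_n$'s.

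One small caution on your write-up plan: your identification of the reduced identities is correct, but be careful with the phrase ``the $C^\pm_i$ by themselves satisfy symmetric rather than antisymmetric relations.'' What actually happens is that the mode relation one extracts from $C^-(z)C^-(w) = (z-w)\,{:}C^-(z)C^-(w){:}$ already has the index shift built in (it reads $C^-_{a+1}C^-_b + C^-_{b+1}C^-_a = 0$), and the lattice factor $e^{\pm\alpha}$ is precisely what realises this shift operator-theoretically via $e^\alpha z^{\partial_\alpha} = z\, z^{\partial_\alpha} e^\alpha$. So the cocycle is not an additional sign you impose by hand; it is the mode-expansion of the zero-mode piece $e^\alpha z^{\alpha_0}$ of the full vertex operator. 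Keeping that picture in mind will make the sign bookkeeping in your mixed-relation case cleaner than a bare case split on the signs of $i$ and $j$.
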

\begin{remark} Notice that the sums in (\ref{eq:C1}) and (\ref{eq:C2}) used to define $C_i^{\pm}$, although infinite, are in fact finite when applied to $V_{Fock}$ (and hence also to $V$). This is because for any $v \in V_{Fock}$ we have $q^{(n)}(v)=0$ for $n \gg 0$. 
\end{remark}
\begin{proof}
See ~\cite{F} or ~\cite[section 2]{An}.
\end{proof}

\section{The categorical Boson-Fermion correspondence}

\subsection{The Heisenberg category $\H$}\label{sec:H}

In \cite{Kh1} Khovanov introduced a categorical framework for the Heisenberg algebra $\h$. This framework consists of an idempotent complete monoidal category $\H$ whose definition we now sketch (see \cite{Kh1} for more details). 

The category $\H$ is generated by objects $\P$ and $\Q$. These can be denoted by an upward pointing strand and a downward pointing strand. Monoidal composition of such objects is then given by sideways concatenation of diagrams. The space of morphisms between products of $\P$'s and $\Q$'s is a $\k$-algebra described by certain string diagrams with relations. By convention, composition of morphisms is done vertically from the bottom and going up.

The morphisms are generated by crossings, caps and cups as shown below
\begin{equation}\label{eq:maps} 
\begin{tikzpicture}[scale=.75][>=stealth]
\draw [->](0,0) -- (1,1) [thick];
\draw [->](1,0) -- (0,1) [thick];
\draw (3,.5) arc (180:360:.5)[->] [thick];
\draw (6.5,.25) arc (0:180:.5) [->][thick];
\draw (9,.5) arc (180:360:.5)[<-] [thick];
\draw (12.5,.25) arc (0:180:.5) [<-][thick];
\end{tikzpicture} 
\end{equation}
Thus, for instance, the left crossing is a map in $\End(\P\P)$ while the right cap is a map $\P \Q \rightarrow \id$. We will abuse notation slightly and denote all these maps $\adj$ since they are all adjunction maps between $\P$'s and $\Q$'s. 

These morphisms satisfy the following relations 
\begin{equation}\label{eq:rel1}
\begin{tikzpicture}[>=stealth]
\draw [shift={+(7,0)}](0,0) .. controls (1,1) .. (0,2)[->][thick] ;
\draw [shift={+(7,0)}](1,0) .. controls (0,1) .. (1,2)[->] [thick];
\draw [shift={+(7,0)}](1.5,1) node {=};
\draw [shift={+(7,0)}](2,0) --(2,2)[->][thick] ;
\draw [shift={+(7,0)}](3,0) -- (3,2)[->][thick] ;

\draw (0,0) -- (2,2)[->][thick];
\draw (2,0) -- (0,2)[->][thick];
\draw (1,0) .. controls (0,1) .. (1,2)[->][thick];
\draw (2.5,1) node {=};
\draw (3,0) -- (5,2)[->][thick];
\draw (5,0) -- (3,2)[->][thick];
\draw (4,0) .. controls (5,1) .. (4,2)[->][thick];
\end{tikzpicture}
\end{equation}

\begin{equation}\label{eq:rel2}
\begin{tikzpicture}[>=stealth]
\draw (0,0) .. controls (1,1) .. (0,2)[<-][thick];
\draw (1,0) .. controls (0,1) .. (1,2)[->] [thick];
\draw (1.5,1) node {=};
\draw (2,0) --(2,2)[<-][thick];
\draw (3,0) -- (3,2)[->][thick];

\draw (3.8,1) node{$-$};

\draw (4,1.75) arc (180:360:.5) [thick];
\draw (4,2) -- (4,1.75) [thick];
\draw (5,2) -- (5,1.75) [thick][<-];
\draw (5,.25) arc (0:180:.5) [thick];
\draw (5,0) -- (5,.25) [thick];
\draw (4,0) -- (4,.25) [thick][<-];

\draw [shift={+(7,0)}](0,0) .. controls (1,1) .. (0,2)[->][thick];
\draw [shift={+(7,0)}](1,0) .. controls (0,1) .. (1,2)[<-] [thick];
\draw [shift={+(7,0)}](1.5,1) node {=};
\draw [shift={+(7,0)}](2,0) --(2,2)[->][thick];
\draw [shift={+(7,0)}](3,0) -- (3,2)[<-][thick];
\end{tikzpicture}
\end{equation}

\begin{equation}\label{eq:rel3}
\begin{tikzpicture}[>=stealth]
\draw [shift={+(0,0)}](0,0) arc (180:360:0.5cm) [thick];
\draw [shift={+(0,0)}][->](1,0) arc (0:180:0.5cm) [thick];
\draw [shift={+(0,0)}](1.75,0) node{$= 1.$};

\draw  [shift={+(5,0)}](0,0) .. controls (0,.5) and (.7,.5) .. (.9,0) [thick];
\draw  [shift={+(5,0)}](0,0) .. controls (0,-.5) and (.7,-.5) .. (.9,0) [thick];
\draw  [shift={+(5,0)}](1,-1) .. controls (1,-.5) .. (.9,0) [thick];
\draw  [shift={+(5,0)}](.9,0) .. controls (1,.5) .. (1,1) [->] [thick];
\draw  [shift={+(5,0)}](1.5,0) node {$=$};
\draw  [shift={+(5,0)}](2,0) node {$0.$};
\end{tikzpicture}
\end{equation}

Moreover, two morphisms which differ by planar isotopies are equal. Relation (\ref{eq:rel1}) implies that there is a map $\k[S_n] \rightarrow \End(\P^n)$. Since $\H$ is idempotent complete this means that we also get objects $\P^{(\l)}$, for any partition $\l \vdash n$, associated with the corresponding minimal idempotent $e_\l \in \k[S_n]$. Likewise we also have $\Q^{(\l)}$ for any $\l \vdash n$. We will denote by $(m)$ and $(1^m)$ the unique one-part and $m$-part partitions of $m$.  The object $ \P^{\lambda} $ will be denoted pictorially by a box labeled by the partition $\lambda$ with upward pointing arrows coming out from the top.  The object $\Q^{\lambda} $ will be denoted similarly except that downward pointing arrows will be coming out from the top.

\begin{theorem}[\cite{Kh1}]\label{thm:1}
Inside $\H$ we have the following relations 
\begin{enumerate}
\item $\P^{(\l)} \P^{(\mu)} \cong \P^{(\mu)} \P^{(\l)}$ for any partitions $\l,\mu$,
\item $\Q^{(\l)} \Q^{(\mu)} \cong \Q^{(\mu)} \Q^{(\l)}$ for any partitions $\l,\mu$,
\item $\Q^{(n)} \P^{(m)} \cong \bigoplus_{k \ge 0} \P^{(m-k)} \Q^{(n-k)}$ and $\Q^{(1^n)} \P^{(1^m)} \cong \bigoplus_{k \ge 0} \P^{(1^{m-k})} \Q^{(1^{n-k})}$,
\item $\Q^{(n)} \P^{(1^m)} \cong \P^{(1^m)} \Q^{(n)} \oplus \P^{(1^{m-1})} \Q^{(n-1)}$ and $\Q^{(1^n)} \P^{(m)} \cong \P^{(m)} \Q^{(1^n)} \oplus \P^{({m-1})} \Q^{(1^{n-1})}$. 
\end{enumerate}
\end{theorem}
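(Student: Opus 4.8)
The plan is to follow the diagrammatic strategy of \cite{Kh1}: for each claimed isomorphism one writes down an explicit morphism in $\H$ assembled from the generating crossings, caps and cups of (\ref{eq:maps}), and then checks invertibility using only the relations (\ref{eq:rel1})--(\ref{eq:rel3}) together with invariance under planar isotopy. The symmetrized objects $\P^{(\l)}$ and $\Q^{(\l)}$ are, as in the excerpt, images of the minimal idempotents $e_\l \in \k[S_n]$ acting on $\P^n$ (resp.\ $\Q^n$), so in each case the real task is to show that the morphism we build intertwines the relevant idempotents. For items (1) and (2): the braid relation in (\ref{eq:rel1}) gives an action of $S_{m+n}$ on $\P^{m+n}$; let $\beta \colon \P^m \P^n \to \P^n \P^m$ be the morphism attached to the block permutation carrying the first $m$ strands past the last $n$. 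Its reverse is a two-sided inverse by (\ref{eq:rel1}) and isotopy, so $\beta$ is an isomorphism, and since conjugation by that block permutation carries $S_m \times S_n \subset S_{m+n}$ to $S_n \times S_m$, it carries $e_\l \otimes e_\mu$ to $e_\mu \otimes e_\l$; hence $\beta$ restricts to an isomorphism $\P^{(\l)} \P^{(\mu)} \xrightarrow{\ \sim\ } \P^{(\mu)} \P^{(\l)}$. Item (2) is the mirror-image statement for $\Q$ (equivalently, apply the biadjunction of $\P$ and $\Q$).

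For items (3) and (4) everything is bootstrapped from the basic isomorphism $\Q\P \cong \P\Q \oplus \id$, whose structure maps $\P\Q \oplus \id \to \Q\P$ are the mixed crossing and the cup, with inverse built from the other mixed crossing and the cap; that these are mutually inverse is exactly relations (\ref{eq:rel2}) and (\ref{eq:rel3}) together with isotopy (\cite[Sec.~2]{Kh1}). Iterating this, one obtains a decomposition $\Q^n \P^m \cong \bigoplus_{k} (\P^{m-k} \Q^{n-k})^{\oplus \mu_k}$ in which a basis for the maps from the $k$-th summand into $\Q^n\P^m$ is given by diagrams that resolve $k$ of the up-strands against $k$ of the down-strands via cups and caps, the remaining strands running straight through; crucially one records how the commuting actions of $S_m$ on the up-strands and $S_n$ on the down-strands permute this basis of partial matchings. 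To deduce (3) one then applies the full symmetrizers $e_{(m)}$ and $e_{(n)}$: on the span of size-$k$ matchings the $S_m \times S_n$-action has a one-dimensional invariant subspace for each $0 \le k \le \min(m,n)$ and no invariants for larger $k$, which produces exactly one copy of $\P^{(m-k)} \Q^{(n-k)}$ for each such $k$; the $(1^m),(1^n)$ version of (3) is identical with antisymmetrizers in place of symmetrizers. For (4) one symmetrizes the $\Q$-side but antisymmetrizes the $\P$-side: two up-strands both participating in the matching would be exchanged by a transposition killed by $e_{(1^m)}$, so only $k \in \{0,1\}$ survives, the invariant line being one-dimensional in each case, which yields $\P^{(1^m)}\Q^{(n)} \oplus \P^{(1^{m-1})}\Q^{(n-1)}$ (here $\Res^{S_n}_{S_{n-1}}(\mathrm{triv}) = \mathrm{triv}$ accounts for the index shift on the $\Q$-side); the other half of (4) is the mirror statement.

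The routine part of all this is the purely diagrammatic verification that the various morphisms built from cups, caps and crossings are mutually inverse --- a finite check against (\ref{eq:rel1})--(\ref{eq:rel3}) and isotopy. The main obstacle, and the only place with genuine content, is the bookkeeping in items (3) and (4): keeping track of the commuting $S_m$- and $S_n$-actions on the decomposition of $\Q^n\P^m$ and correctly identifying its (trivial-or-sign)$\,\otimes\,$(trivial-or-sign) isotypic pieces, i.e.\ confirming that every multiplicity is exactly one and that no other $\P^{(\l)}\Q^{(\mu)}$ occurs. This is a Pieri-type computation in the representation theory of the symmetric groups; it cannot produce surprises, since applying $K_0$ to these isomorphisms must recover the already-established identities of Propositions \ref{prop:1} and \ref{prop:2}, but the categorical statement genuinely requires knowing that the $\P^{(\l)}\Q^{(\mu)}$ are pairwise non-isomorphic indecomposables so that such a decomposition is unique --- which is itself established in \cite{Kh1}.
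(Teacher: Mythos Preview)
Your proposal is correct and aligns with the paper's approach, which simply defers to \cite{Kh1} and \cite{CL1}: items (1)--(2) via the block permutation, and items (3)--(4) via the diagrammatic maps built from caps, cups and crossings. The one place where your framing differs slightly is in (3)--(4): the argument in \cite[Prop.~1]{Kh1} (and the analogue in \cite[Prop.~2]{CL1}) writes down explicit maps $A_k: \Q^{(n)}\P^{(m)} \to \P^{(m-k)}\Q^{(n-k)}$ and $B_k$ in the reverse direction and checks directly, by diagrammatic manipulation, that $A_kB_l=\delta_{kl}\,\id$ and $\sum_k B_kA_k=\id$. You instead first decompose $\Q^n\P^m$ completely, then analyze the $S_m\times S_n$-action on the matching basis and extract isotypic pieces. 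Both are valid; yours is a bit more representation-theoretic in flavor and has the advantage of making the Pieri-type structure transparent, but it costs you the appeal to indecomposability/Krull--Schmidt at the end, which the direct approach of \cite{Kh1} avoids entirely since it produces an explicit two-sided inverse.
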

\begin{proof}
The first and second relations are clear because all $\P$'s commute among themselves and likewise for $\Q$'s. Alternatively one applies the same proof as for the last two relations in \cite[Prop. 1]{Kh1}. The fourth relation appears in \cite[Prop. 1]{Kh1}. Although the third relation does not explicitly appear in \cite{Kh1} it can be easily derived in the same way as the fourth relation (or alternatively as in the proof of \cite[Prop. 2]{CL1} where one uses the same calculation but without having any dots). 
\end{proof}

Thus, at the level of Grothendieck groups we have a map $\h \rightarrow K_0(\H)$. This map is known to be injective but it is not known if it is surjective. 

In this categorical context there is also an analogue of the Fock space $V_{Fock}$. One such analogue is given by the direct sum $\oplus_{n \ge 0} \k[S_n]\mod$, as discussed in \cite{Kh1}. Though it may not be unique, we will fix from now on such a categorical representation $\V_{Fock}$. In practice the only thing we will use of $\V_{Fock}$ is that on any object, applying $\Q$ sufficiently many times gives zero. 

\subsection{Complexes}

We will need to work with the homotopy category $\Kom(\H)$ of $\H$. Here the objects are unbounded complexes of objects in $\H$ while morphisms are maps of complexes. Two objects are deemed equivalent if there is a homotopy equivalence between them. This gives $\Kom(\H)$ the structure of a triangulated category. 

We can now define the analogues of $C_i^\pm$ from (\ref{eq:C1}) and (\ref{eq:C2}) as follows
\begin{equation}
\label{eq:cpxC1} \C^-_i := 
\begin{cases}
\left( \dots \rightarrow \P^{(k)} \Q^{(1^{i+k})} \rightarrow \dots \rightarrow \P \Q^{(1^{i+1})} \rightarrow \Q^{(1^i)} \right) & \text{ if } i \ge 0 \\
\left( \dots \rightarrow \P^{(-i+k)} \Q^{(1^k)} \rightarrow \dots \rightarrow \P^{(-i+1)} \Q \rightarrow \P^{(-i)} \right) [-i]  & \text{ if } i \le 0 
\end{cases}
\end{equation}
where the right most term is in cohomological degree zero and $[1]$ indicates a negative cohomological shift by $1$. The differential here is given by the composition 
$$\P^{(k)} \Q^{(1^{i+k})} \rightarrow \P^{(k-1)} \P \Q \Q^{(1^{i+k-1})} \xrightarrow{I \adj I} \P^{(k-1)} \Q^{(1^{i+k-1})}$$ 
where the first map consists of the inclusions 
$$\P^{(k)} \hookrightarrow \P^{(k-1)} \P \cong \P^{(k)} \oplus \P^{(k-1,1)} \ \ \text{ and } \ \ \Q^{(1^{i+k})} \hookrightarrow \Q \Q^{(1^{i+k-1})} \cong \Q^{(1^{i+k})} \oplus \Q^{(2,1^{i+k-2})}$$ 
while the second map uses the adjunction $\adj: \P \Q \rightarrow \id$. Diagrammatically this is given by 
\begin{equation} \label{eq:diff0}
\begin{tikzpicture}[>=stealth]
\draw (9.75,0) rectangle (11.25,.5);
\draw (10.5,.25) node {$(k)$};
\draw (9.75,1.5) rectangle (11.25,2);
\draw (10.5,1.75) node {$(k-1)$};
\draw (10.5,.5) -- (10.5,1.5) [->][thick];
\draw (12.5,.5) -- (12.5,1.5) [<-][thick];
\draw (11.75,0) rectangle (13.25,.5);
\draw (12.5,.25) node {$(1^{i+k})$};
\draw (11.5,1.5) rectangle (13.5,2);
\draw (12.5,1.75) node {$(1^{i+k-1})$};
\draw (11,.5) arc (180:0:0.5cm)[->] [thick];
\end{tikzpicture}
\end{equation}
Likewise, we have
\begin{equation}
\label{eq:cpxC2} \C^+_i := 
\begin{cases}
\left( \P^{(1^{i})} \rightarrow \P^{(1^{i+1})} \Q \rightarrow \dots \rightarrow \P^{(1^{i+k})} \Q^{(k)} \rightarrow \dots \right) & \text{ if } i \ge 0 \\
\left( \Q^{(-i)} \rightarrow \P \Q^{(-i+1)} \rightarrow \dots \rightarrow \P^{(1^k)} \Q^{(-i+k)} \rightarrow \dots \right) [i] & \text{ if } i \le 0
\end{cases}
\end{equation}
where the left most term is in cohomological degree zero and the differential is given by 
\begin{equation} \label{eq:diff1}
\begin{tikzpicture}[>=stealth]
\draw (9.5,0) rectangle (11.5,.5);
\draw (10.5,1.75) node {$(i+k)$};
\draw (9.75,1.5) rectangle (11.25,2);
\draw (10.5,.25) node {$(i+k-1)$};
\draw (10.5,.5) -- (10.5,1.5) [->][thick];
\draw (12.5,.5) -- (12.5,1.5) [<-][thick];
\draw (11.75,0) rectangle (13.25,.5);
\draw (12.5,1.75) node {$(1^k)$};
\draw (11.75,1.5) rectangle (13.25,2);
\draw (12.5,.25) node {$(1^{k-1})$};
\draw (11,1.5) arc (-180:0:0.5cm)[<-] [thick];
\end{tikzpicture}
\end{equation}

\subsection{A categorical action of $\cl$}

The analogue of $V$ is the disjoint union $\V := \amalg_{n \in \Z} \V_{Fock}$. Abusing notation one could also write $\V := \k[e^{\pm \alpha}] \otimes_\k \V_{Fock}$. 

\begin{conj}[Categorical Boson-Fermion correspondence]\label{conj:1}\ \\
On $\Kom(\V)$ define functors
$$\Psi_i(e^{n \alpha} \otimes \v) := e^{(n+1) \alpha} \otimes \C_{i+n}^-(\v) \ \ \text{ and } \ \ \Psi_i^*(e^{(n+1) \alpha} \otimes \v) := e^{n \alpha} \otimes \C_{i+n}^+(\v).$$
These functors satisfy the following relations in $\Kom(\H)$
\begin{enumerate}
\item \label{R2} $\Psi_i^2 \cong 0 \cong (\Psi_i^*)^2$ while $\Psi_i \Psi_j \cong \begin{cases} \Psi_j \Psi_i [-1] &\text{ if } i < j \\ \Psi_j \Psi_i [1] & \text{ if } i > j \end{cases}$ and $\Psi^*_i \Psi^*_j \cong \begin{cases} \Psi^*_j \Psi^*_i [-1] &\text{ if } i < j \\ \Psi^*_j \Psi^*_i [1] & \text{ if } i > j \end{cases}$ 
\item \label{R3} $\Psi_i \Psi_j^* \cong \begin{cases} \Psi_j^* \Psi_i [1] & \text{ if } i < j \\ \Psi_j^* \Psi_i [-1] & \text{ if } i > j \end{cases}$
\item \label{R4} there exists a distinguished triangle $\Psi_i \Psi_i^* \rightarrow \id \rightarrow \Psi_i^* \Psi_i$.
\end{enumerate}
\end{conj}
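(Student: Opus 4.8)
The plan is to follow the strategy used in \cite{CL2} to categorify the Frenkel--Kac--Segal construction, since Conjecture \ref{conj:1} is the exact Clifford-algebra analogue of that quantum-affine statement. The whole problem reduces to computing the convolutions $\C^-_a \C^-_b$, $\C^+_a \C^+_b$ and $\C^-_a \C^+_b$ in $\Kom(\H)$, together with the two-term complex $\C^-_a \C^+_a \to \id$, and identifying each of them --- up to homotopy equivalence --- with a cohomological shift of another such complex, respectively with an explicit cone. The decategorified identities are already supplied by Theorem \ref{thm:A}: at the level of $K_0(\H) \supseteq \h$ the operators $C^\pm_i$ satisfy the Clifford relations (\ref{eq:cl1}), so in every case the Euler characteristics of the complexes we produce are forced to match the target. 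The work is to upgrade these numerical coincidences to honest homotopy equivalences and to a distinguished triangle in (\ref{R4}).

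Three ingredients carry the argument. First, the categorical Heisenberg isomorphisms of Theorem \ref{thm:1} let one expand every bidegree of a double complex built from two of the $\C^\pm$'s as an explicit finite direct sum of objects $\P^{(\lambda)}\Q^{(\mu)}$. Second, one needs a Gaussian-elimination (``cancellation'') lemma: a summand $A \xrightarrow{\sim} A$ occurring inside a complex may be excised at the cost of twisting the surrounding differentials, the result being homotopy equivalent to the original. Third, to know \emph{which} components of a differential are isomorphisms one must compute the relevant composites of the string-diagram generators of $\H$, using the relations (\ref{eq:rel1})--(\ref{eq:rel3}); this is where the maps $\adj$ entering the differentials (\ref{eq:diff0}) and (\ref{eq:diff1}) get matched against the summand decompositions coming from Theorem \ref{thm:1}.

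Concretely: for relation (\ref{R2}) one forms the double complex underlying $\Psi_i \Psi_j$ --- i.e. a convolution of two of the $\C^-$'s, with the index shift dictated by the $e^{\pm\alpha}$-grading --- decomposes each term via Theorem \ref{thm:1}(3)--(4), and checks that the differential pairs up all summands (when $i=j$, giving $\Psi_i^2 \cong 0$) or all but one ``diagonal'' family (when $i \neq j$), which after cancellation is exactly $\Psi_j \Psi_i$ shifted by $[-1]$ or $[1]$ according to the sign of $i-j$; the statements for $\Psi^*$ are identical, and the sign in the shift records which of the two complexes contributes the surviving extremal term. Relation (\ref{R3}) is the same computation for the mixed convolution $\C^-_a \C^+_b$, now using Theorem \ref{thm:1}(4) to move $\Q$'s past $\P$'s. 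For (\ref{R4}) one writes down an explicit chain map $\Psi_i \Psi_i^* \to \id$ (target in cohomological degree $0$) built from the adjunction cap $\P\Q \to \id$, and then identifies its cone: after applying Theorem \ref{thm:1} and cancelling acyclic summands, the cone becomes homotopy equivalent to $\Psi_i^* \Psi_i$, yielding the triangle.

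The main obstacle is twofold. The complexes $\C^\pm_i$ are unbounded, so the cancellation step removes infinitely many summands at once; making this rigorous requires working in a suitably completed homotopy category, or exploiting that on $\V_{Fock}$ one has $\Q^{n} \to 0$, which renders every convolution ``locally finite'' and the cancellation legitimate --- this is the categorification of the bookkeeping that makes the sums in Theorem \ref{thm:A} finite on $V_{Fock}$, and it must be handled with care. The second, more delicate, point is the genuine diagrammatic calculation in $\H$: one must verify that after the decompositions of Theorem \ref{thm:1} the composite differentials have components equal to the predicted isomorphisms (and zero elsewhere), and for (\ref{R4}) that the constructed map $\Psi_i\Psi_i^* \to \id$ really has cone $\Psi_i^*\Psi_i$ --- this last identification is where the categorified statement genuinely departs from the algebra identity $\psi_i\psi_i^* + \psi_i^*\psi_i = 1$ and needs the most work. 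Apart from these points the argument is that of \cite{CL2}, and --- as the complexes $\T$ of \cite{CLS} suggest --- the same cancellation techniques should also govern the related statements there.
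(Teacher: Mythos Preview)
Your proposal is a sensible outline, and indeed it matches exactly the approach the paper itself says \emph{should} work: the introduction states explicitly that ``although this action is only conjectural, the proof should follow along the same lines as that from \cite{CL2}.'' But you should be aware that the paper does \emph{not} prove Conjecture \ref{conj:1} --- it is genuinely left as a conjecture. What the paper provides instead (Section \ref{sec:exuntwisted}) is a pair of worked examples under the truncation hypothesis $\Q^n = 0$ for $n>2$: Example 1 verifies $\Psi_i^2 \cong 0$ in that truncated setting, and Example 2 verifies the distinguished triangle of relation (\ref{R4}), both by applying Theorem \ref{thm:1} to decompose terms and then using Lemma \ref{lem:cancel} to cancel matched summands. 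These are sanity checks, not a proof.

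So there is no discrepancy between your approach and the paper's: you are proposing to carry out in full the programme the paper sketches and illustrates. The two obstacles you name --- managing the unbounded cancellation rigorously (either by completion or by working on $\V_{Fock}$ where $\Q^n$ eventually vanishes) and actually verifying, diagram by diagram, that the relevant components of the differentials are isomorphisms --- are precisely the reasons the statement remains a conjecture in the paper. Your outline is correct as a strategy; what is missing is the execution, and the paper does not supply it either.
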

The relations above are categorical analogues of the relations (\ref{eq:cl1}) in section ~\ref{sec:corr}. Note that it is easy to see that $\Psi_i^*$ is both left and right adjoint to $\Psi_i$ because the left and right adjoints of $\P^{(n)}$ and $\P^{(1^n)}$ are $\Q^{(n)}$ and $\Q^{(1^n)}$ respectively. 

\subsection{Examples: untwisted case}\label{sec:exuntwisted}

The relations in Conjecture \ref{conj:1} are based on various calculations. We illustrate this with a couple of examples. The main tool we use is the following cancellation Lemma which allows one to repeatedly simplify a complex in the homotopy category $\Kom(\H)$. This result is a slight generalization of a lemma of Bar-Natan (see \cite[Lem. 6.1]{CL2} for a proof). 

\begin{lemma}\label{lem:cancel}
Let $ X, Y, Z, W, U, V$ be six objects in an additive category and consider a complex
\begin{equation}\label{eq:A}
\dots \rightarrow U \xrightarrow{u} X \oplus Y \xrightarrow{f} Z \oplus W \xrightarrow{v} V \rightarrow \dots
\end{equation}
where $f = \left( \begin{matrix} A & B \\ C & D \end{matrix} \right)$ and $u,v$ are arbitrary morphisms. If $D: Y \rightarrow W$ is an isomorphism, then (\ref{eq:A}) is homotopic to a complex 
$$\dots \rightarrow U \xrightarrow{u} X \xrightarrow{A-BD^{-1}C} Z \xrightarrow{v|_Z} V \rightarrow \dots$$
\end{lemma}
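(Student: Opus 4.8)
The statement is the standard Gaussian-elimination lemma for complexes in an additive category, so the plan is a direct construction of the homotopy equivalence. First I would record the two obvious maps relating the two complexes: the projection $\pi$ from (\ref{eq:A}) onto the ``reduced'' complex, which is the identity on $U$, $X$, $Z$, $V$ and, on the $Z\oplus W$ term, is the map $\left(\begin{matrix} \id_Z & -BD^{-1}\end{matrix}\right)$ (so that it kills the $W$-summand after a shear), and on $X\oplus Y$ is the projection to $X$; and an inclusion $\iota$ in the other direction which is the identity on $U$, $X$, $Z$, $V$, sends $X$ into $X\oplus Y$ via $\left(\begin{matrix}\id_X \\ -D^{-1}C\end{matrix}\right)$, and sends $Z$ into $Z\oplus W$ as $\left(\begin{matrix}\id_Z \\ 0\end{matrix}\right)$.

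The next step is the routine but essential check that $\pi$ and $\iota$ are genuine chain maps: this amounts to matrix computations using the factorization $f=\left(\begin{matrix}A & B\\ C & D\end{matrix}\right)$ together with $u$ and $v$, where the cross-terms are precisely arranged so that the ``$-BD^{-1}C$'' correction appears as the induced differential $X\to Z$, and so that the compositions $v\circ(\text{new }f)$ and $(\text{new }f)\circ u$ match the old ones; here one uses that $f\circ u$ and $v\circ f$ vanish (i.e. that (\ref{eq:A}) is a complex). Then I would verify $\pi\circ\iota=\id$ on the reduced complex, and exhibit an explicit homotopy $H$ with $\iota\circ\pi-\id = dH+Hd$ on (\ref{eq:A}); the homotopy is supported in a single spot, namely the degree-$-1$ map $Z\oplus W\to X\oplus Y$ (or equivalently $W\to Y$ composed with $D^{-1}$ and the relevant projections/inclusions), given by $\left(\begin{matrix}0 & 0\\ 0 & D^{-1}\end{matrix}\right)$ in the appropriate basis, and one checks the homotopy identity degree by degree using invertibility of $D$.

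\textbf{Main obstacle.} There is no deep obstacle; the only thing to be careful about is bookkeeping, in two respects. First, the lemma is stated for an unbounded complex with arbitrary neighbouring maps $u$ and $v$, so one must make sure the chain-map and homotopy identities close up correctly at the two ``boundary'' spots $U\to X\oplus Y$ and $Z\oplus W\to V$ and not merely at the middle spot; this is where the hypotheses $fu=0$ and $vf=0$ get used. Second, one should be mindful of sign conventions for the differential and for the shift $[1]$ so that the induced differential really is $A-BD^{-1}C$ and not $A+BD^{-1}C$. Since this is a ``slight generalization of a lemma of Bar-Natan'' and a proof is available in \cite[Lem.\ 6.1]{CL2}, I would present the maps $\pi$, $\iota$, and the homotopy $H$ explicitly and leave the verification of the chain-map and homotopy identities as a short matrix computation, citing \cite{CL2} for the details.
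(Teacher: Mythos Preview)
Your proposal is correct and is exactly the standard Gaussian-elimination argument; the paper itself does not give a proof but simply cites \cite[Lem.~6.1]{CL2}, where precisely this construction of $\pi$, $\iota$, and the one-term homotopy supported on $D^{-1}:W\to Y$ is carried out. Your plan therefore matches the paper's (referenced) proof essentially verbatim, including the caveat about checking the boundary identities using $fu=0$ and $vf=0$.
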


\subsubsection{Example 1}\label{sec:ex1} Suppose that $\Psi_i$ is of the form 
\begin{equation}\label{ex:psi}
\Psi_i = [\dots \rightarrow \P^{(2)} \Q^{(1^2)} \rightarrow \P \Q \rightarrow \id].
\end{equation}
Moreover, to simplify the example we assume $\Q^n = 0$ for $n > 2$. Then 
\begin{align}
\nonumber \Psi_i \Psi_i 
&\cong [\P \Q^{(1^2)} \rightarrow \Q][\P^{(2)} \Q^{(1^2)} \rightarrow \P \Q \rightarrow \id] \\
\nonumber &\cong \left[\P \Q^{(1^2)} \P^{(2)} \Q^{(1^2)} \rightarrow \P \Q^{(1^2)} \P \Q \oplus \Q \P^{(2)} \Q^{(1^2)} \rightarrow \P \Q^{(1^2)} \oplus \Q \P \Q \rightarrow \Q \right] \\
\label{calc:0} &\cong \left[ 0 \rightarrow \P \Q \Q \oplus \P \Q^{(1^2)} \rightarrow \P \Q^{(1^2)} \oplus \Q \oplus \P \Q \Q \rightarrow \Q \right]
\end{align}
where we used that $\Q^{(n)} = \Q^{(1^n)} = 0$ for $n > 2$ to simplify the second line. 

Claim: the maps in (\ref{calc:0}) are isomophisms between the pairs of $\Q, \P\Q\Q$ and $\P\Q^{(1^2)}$. For example, the map between the two $\Q$'s is given by the composition 
$$\Q \xrightarrow{\adj I} \Q \P \Q \xrightarrow{I \adj} \Q$$
which is the identity map by the basic properties of adjunction. Subsequently the complex in (\ref{calc:0}) is contractible and hence $\Psi_i \Psi_i = 0$ (proving relation (\ref{R2}) from Conjecture \ref{conj:1} holds). 

\subsubsection{Example 2}\label{sec:ex2} Suppose we are in the same situation as Example 1 so that $\Psi_i$ is given by (\ref{ex:psi}). We are still assuming $\Q^n = 0$ for $n > 2$. Then 
\begin{align}
\Psi_i^* \Psi_i 
\nonumber &\cong [\id \rightarrow \P \Q \rightarrow \P^{(1^2)} \Q^{(2)}][\P^{(2)} \Q^{(1^2)} \rightarrow \P \Q \rightarrow \id] \\
\nonumber &\cong \left[ \P^{(2)} \Q^{(1^2)} \rightarrow \begin{matrix} \P \Q \oplus \\ \P \Q \P^{(2)} \Q^{(1^2)} \end{matrix} \rightarrow \begin{matrix} \id \oplus \P \Q \P \Q \\ \P^{(1^2)} \Q^{(2)} \P^{(2)} \Q^{(1^2)} \end{matrix} \rightarrow \begin{matrix} \P \Q \oplus \\ \P^{(1^2)} \Q^{(2)} \P \Q \end{matrix} \rightarrow \P^{(1^2)} \Q^{(2)} \right] \\
\label{calc:1} &\cong \left[\P^{(2)} \Q^{(1^2)} \rightarrow \begin{matrix} \P \Q \oplus \\ \P \P \Q^{(1^2)} \end{matrix} \rightarrow \begin{matrix} \id \oplus \P \P \Q \Q \oplus \P \Q \\ \oplus \P^{(1^2)} \Q^{(1^2)} \end{matrix} \rightarrow \begin{matrix} \P \Q \oplus \\ \P^{(1^2)} \Q \Q \end{matrix} \rightarrow \P^{(1^2)} \Q^{(2)} \right] 
\end{align}
where, to obtain the third isomorphism, we used that $\Q^{(n)}=\Q^{(1^n)}=0$ for $n>2$ together with commutation relations such as $\Q^{(2)} \P^{(2)} \cong \P^{(2)} \Q^{(2)} \oplus \P \Q \oplus \id$. Notice that the grading is such that $\id$ is in cohomological degree zero. 

Claim: The map in (\ref{calc:1}) between the two right-most copies of $\P \Q$ is an isomorphism. One can see this because this map is given by the composition
$$\P \Q \xrightarrow{I \adj I} \P \Q \P \Q \xrightarrow{II \adj} \P \Q$$
which is the identity map by the standard properties of adjunction morphisms. Thus, using the cancellation Lemma, one can get rid of these two terms. Similarly, one can show that the piece of the complex $\P^{(1^2)} \Q^{(1^2)} \rightarrow \P^{(1^2)} \Q \Q \rightarrow \P^{(1^2)} \Q^{(2)}$ is homotopic to zero since the first map is the natural inclusion and the second map the natural projection. Thus (\ref{calc:1}) is homotopic to 
$$\left[\P^{(2)} \Q^{(1^2)} \rightarrow \begin{matrix} \P \Q \oplus \\ \P \P \Q^{(1^2)} \end{matrix} \rightarrow \id \oplus \P \P \Q \Q \right].$$
Finally, one can check that the map $\P \P \Q^{(1^2)} \rightarrow \P \P \Q \Q \cong \P \P \Q^{(1^2)} \oplus \P \P \Q^{(2)}$ in the above composition is the natural inclusion. Applying Lemma \ref{lem:cancel} again we are left with 
\begin{equation}\label{calc:2}
\Psi_i^* \Psi_i \cong \left[ \P^{(2)} \Q^{(1^2)} \rightarrow \P \Q \rightarrow \id \oplus \P \P \Q^{(2)} \right]
\end{equation}
where the differentials are the obvious ones ({\it i.e.} given by caps or cups). On the other hand, 
\begin{align}
\nonumber \Psi_i \Psi_i^* &\cong [\P^{(2)} \Q \rightarrow \P][\Q \rightarrow \P \Q^{(2)}] \\
\nonumber &\cong \left[\P^{(2)} \Q \Q \rightarrow \P^{(2)} \Q \P \Q^{(2)} \oplus \P \Q \rightarrow \P \P \Q^{(2)} \right] \\
\nonumber &\cong \left[\P^{(2)} \Q^{(2)} \oplus \P^{(2)} \Q^{(1^2)} \rightarrow \P^{(2)} \Q^{(2)} \oplus \P \Q \rightarrow \P \P \Q^{(2)} \right] \\
\label{calc:3} &\cong \left[\P^{(2)} \Q^{(1^2)} \rightarrow \P \Q \rightarrow \P \P \Q^{(2)} \right]
\end{align}
where we used again Lemma \ref{lem:cancel} to cancel the summand $\P^{(2)} \Q^{(2)}$ in the third line. The differentials here are again the obvious ones. Since the $\P \Q$ in complex (\ref{calc:3}) occurs in degree zero it is easy to check, by comparing with (\ref{calc:2}), that $\Cone(\id \rightarrow \Psi_i^* \Psi_i) \cong \Psi_i \Psi_i^* [1]$ (proving relation \ref{R4} from Conjecture \ref{conj:1} holds).

\section{Aside: complexes and projectors}\label{sec:projectors}

As noted in the introduction, the complexes $\C_i^{\pm}$ from the last section live in $\Kom(\H)$ and are analogous to those studied in \cite{CL2} (which live in a different Heisenberg category $\Kom(\H^\Gamma)$). On the other hand, in \cite{CLS} we studied certain complexes $\T_i$, which again live in $\Kom(\H^\Gamma)$. We showed that these give an action of the braid group. These complexes also have analogues $\T^{\pm} \in \Kom(\H)$. Although $\T^{\pm}$ are not directly related to the Boson-Fermion correspondence this seems like a good place to briefly discuss them. We define
\begin{align}
\label{eq:P1} \T^- &:= \left[ \dots \rightarrow \bigoplus_{\l \vdash d} \P^{(\l)} \Q^{(\l^t)} \rightarrow \bigoplus_{\mu \vdash d-1} \P^{(\mu)} \Q^{(\mu^t)} \rightarrow \dots \rightarrow \P \Q \rightarrow \id \right] \\
\label{eq:P2} \T^+ &:= \left[ \id \rightarrow \P \Q \rightarrow \dots \rightarrow \bigoplus_{\mu \vdash d-1} \P^{(\mu)} \Q^{(\mu^t)} \ \rightarrow \bigoplus_{\l \vdash d} \P^{(\l)} \Q^{(\l^t)} \rightarrow \dots \right]
\end{align}
where $\l^t$ denotes the transpose of $\l$. The differential in (\ref{eq:P1}) is defined by the composition
$$\P^{(\l)} \Q^{(\l^t)} \rightarrow \P^{(\mu)} \P \Q \Q^{(\mu^t)} \rightarrow \P^{(\mu)} \Q^{(\mu^t)}$$
where the first map is inclusion (using the fact that there is a unique summand $\P^{(\l)}$ in $\P^{(\mu)} \P$ if $\l$ is obtained by adding a box to $\mu$ by ~\cite[Lemma 5]{CL1}) while the second map is adjunction. 
The composition $ \P^{(\l)} \Q^{(\l^t)} \rightarrow \P^{(\mu)} \Q^{(\mu^t)}$ defined above is unique up to rescaling.
However, an analogue of \cite[Prop. 4.7]{CLS}, can be used to show that there is a unique way (up to homotopy) to choose these multiples so that (\ref{eq:P1}) becomes a complex. The differentials in (\ref{eq:P2}) are defined similarly (or equivalently by defining $\T^+$ as the adjoint of $\T^-$).

\begin{conj}[Categorical projectors]\label{conj:3}
We have $\T^+ \P^{(\l)} \cong 0 \cong \T^- \P^{(\l)}$ for any nontrivial partition $\l$. In particular, this implies that $\T^+$ and $\T^-$ are projectors.
\end{conj}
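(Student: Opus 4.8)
The plan is to reduce everything to the single equivalence $\T^-\P\cong 0$ in $\Kom(\H)$. First, since $\P^{(\l)}$ is a direct summand of $\P^{\otimes|\l|}$, the equivalence $\T^-\P\cong 0$ gives $\T^-\P^{\otimes n}\cong 0$ and hence $\T^-\P^{(\l)}\cong 0$ for every $\l\neq\emptyset$. For the $\T^+$ statement recall that $\T^+=(\T^-)^{\adj}$; dualizing, $\T^+\P^{(\l)}\cong 0$ is equivalent to $\Q^{(\l)}\T^-\cong 0$, which by the same summand argument follows from $\Q\T^-\cong 0$. The latter is proved by the mirror of the computation below, commuting a single $\Q$ past each $\P^{(\l)}$ on the left using $\Q\P^{(\l)}\cong\P^{(\l)}\Q\oplus\bigoplus_{\mu}\P^{(\mu)}$, the sum over $\mu$ obtained from $\l$ by removing a box. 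Finally, granting the vanishing: every term of $\T^\pm$ in nonzero cohomological degree has the form $\P^{(\l)}\Q^{(\l^t)}$ with $\l\neq\emptyset$, hence is annihilated by $\T^\pm$ on the relevant side; forming the convolution $\T^-\T^-$ and deleting all these null terms leaves only the contribution of the degree-zero object $\id$, so $\T^-\T^-\cong\T^-$, and likewise $\T^+\T^+\cong\T^+$. Thus both complexes are idempotent, i.e.\ projectors.

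It remains to establish $\T^-\P\cong 0$. On Grothendieck groups this is the classical identity $\sum_{\l}(-1)^{|\l|}s_\l\,s_{\mu/\l^t}=0$ for $\mu\neq\emptyset$, a consequence of the dual Cauchy identity, and this dictates the shape of the cancellation. To lift it, expand $\T^-\P$ termwise using the Pieri-type isomorphisms $\P^{(\l)}\P\cong\bigoplus_{\kappa}\P^{(\kappa)}$ (sum over $\kappa$ obtained from $\l$ by adding a box, cf.\ \cite[Lemma 5]{CL1}) and $\Q^{(\l^t)}\P\cong\P\Q^{(\l^t)}\oplus\bigoplus_{\nu}\Q^{(\nu^t)}$ (sum over $\nu$ obtained from $\l$ by removing a box, an extension of Theorem \ref{thm:1}). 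In cohomological degree $-d$ the result decomposes as $A_{-d}\oplus B_{-d}$, where $A_{-d}$ collects the summands $\P^{(\kappa)}\Q^{(\l^t)}$ obtained by commuting the extra $\P$ all the way to the left (here $\l\vdash d$ and $\kappa=\l+\text{box}$), while $B_{-d}$ collects the box-removal summands $\P^{(\l)}\Q^{(\nu^t)}$ (here $\l\vdash d$ and $\nu=\l-\text{box}$). Reindexing shows that $A_{-d}$ and $B_{-(d+1)}$ are isomorphic as objects: each ranges over the set of pairs consisting of a partition of $d+1$ together with a partition of $d$ contained in it.

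The crux is that the component of the differential of $\T^-\P$ from $B_{-(d+1)}$ to $A_{-d}$ is an isomorphism. For a summand $\P^{(\kappa)}\Q^{(\nu^t)}$ of $B_{-(d+1)}$ (with $\nu=\kappa-\text{box}$), one computes its image under $d_{\T^-}\otimes\id_\P$ and projects onto the matching summand $\P^{(\kappa)}\Q^{(\nu^t)}$ of $A_{-d}$: the result is a composite of the inclusion $\Q^{(\nu^t)}\hookrightarrow\Q^{(\kappa^t)}\P$, the middle adjunction counit $\adj\colon\P\Q\to\id$, and the projection $\P^{(\nu)}\P\twoheadrightarrow\P^{(\kappa)}$, and the zig-zag relations in (\ref{eq:rel3}) collapse this to $\pm\id$. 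The remaining matrix entries of $B_{-(d+1)}\to A_{-d}$ either vanish (for instance, entries changing the $\P$-label are killed because $\P^{(\kappa)}$ appears with multiplicity one in $\P^{(\nu)}\P$) or form a nilpotent correction with respect to a suitable order on the indexing pairs; in either case the map is invertible. One may then apply the cancellation Lemma \ref{lem:cancel} to cancel $A_{-d}$ against $B_{-(d+1)}$ for every $d\ge 0$. These cancellations do not interfere: the correction produced when cancelling $A_{-d}\leftrightarrow B_{-(d+1)}$ lands in cohomological degree $-d$, which has already been emptied, and since on any object of $\V_{Fock}$ all but finitely many terms of $\T^-$ act as zero the resulting homotopy is locally finite. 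Hence $\T^-\P\cong 0$.

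The chief obstacle is the diagrammatic calculation of the previous paragraph. It has two components. First, the differential of $\T^-$ must be normalized correctly: as in \cite{CLS}, the scalars attached to the maps $\P^{(\l)}\Q^{(\l^t)}\to\P^{(\mu)}\Q^{(\mu^t)}$ are determined only up to an analogue of \cite[Prop.\ 4.7]{CLS}, and one needs this normalization for the statement above to hold on the nose. Second, one must verify that every off-diagonal entry of $B_{-(d+1)}\to A_{-d}$ is nilpotent, which amounts to a bookkeeping exercise in the graphical calculus of $\H$ together with the combinatorics of adding and removing boxes in partitions --- analogous to, but more elaborate than, the computations illustrated in Section \ref{sec:exuntwisted}.
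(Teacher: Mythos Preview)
The statement you are attempting to prove is presented in the paper as \emph{Conjecture}~\ref{conj:3}: the paper does not supply a proof, nor even a proof sketch, for it. There is therefore no ``paper's own proof'' against which to compare your proposal. What the paper does provide, in the paragraph following the conjecture, is motivation (the decategorified picture of projecting onto copies of Fock space), and then moves on to the related Conjecture~\ref{conj:4}.

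As to the substance of your sketch: the reduction strategy is sound. The passages from $\T^-\P\cong 0$ to $\T^-\P^{(\l)}\cong 0$ (via summands of $\P^{\otimes|\l|}$), from $\Q^{(\l)}\T^-\cong 0$ to $\T^+\P^{(\l)}\cong 0$ (via biadjunction), and from the vanishing to idempotency (via the filtration of the second factor in $\T^-\T^-$ by cohomological degree) are all correct, with the usual caveat that the unbounded complexes must be handled on $\V_{Fock}$ where they truncate. The Pieri-type isomorphism $\Q^{(\mu)}\P\cong\P\Q^{(\mu)}\oplus\bigoplus_{\mu'}\Q^{(\mu')}$ for general $\mu$ is not stated in the paper (Theorem~\ref{thm:1} covers only one-row and one-column shapes), so you would need to supply or cite it.

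The genuine gap is exactly where you locate it: the claim that the component $B_{-(d+1)}\to A_{-d}$ of the differential is invertible. You assert that the diagonal entries are $\pm\id$ and that the off-diagonal entries are nilpotent with respect to some order, but neither is verified. The diagonal computation is already delicate because it depends on the (only up-to-homotopy unique) normalization of the scalars in the differential of $\T^-$, as you note; and the off-diagonal analysis --- showing that entries linking $\P^{(\kappa)}\Q^{(\nu^t)}$ to $\P^{(\kappa')}\Q^{(\nu'^t)}$ with $(\kappa',\nu')\neq(\kappa,\nu)$ are either zero or strictly increasing in some partial order --- is precisely the content that would turn the conjecture into a theorem. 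Without it, what you have is a plausible strategy together with the (correct) observation that the Grothendieck-group shadow of the argument is the dual Cauchy identity, but not a proof.
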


At the decategorified level any representation of $\h$ generated by highest weight vectors breaks up into a direct sum of copies of the Fock space (each of which is generated by a highest weight vector). Conjecture \ref{conj:3} implies that $\T^{\pm}$ can be used to project onto these different copies of categorified Fock space.

One might wonder why there are two projectors, $\T^-$ and $\T^+$ and what this might mean? One possibility is that $\T^-$ and $\T^+$ are actually isomorphic.

\begin{conj}\label{conj:4}
We have $\T^+ \cong \T^-$.
\end{conj}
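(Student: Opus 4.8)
\textbf{Proof proposal for Conjecture \ref{conj:4}.}

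The plan is to exhibit an explicit homotopy equivalence between $\T^-$ and $\T^+$ by a mapping-cone argument, using Conjecture \ref{conj:3} as the main input. First I would observe that $\T^-$ is, by construction \eqref{eq:P1}, a complex supported in non-positive cohomological degrees with $\id$ in degree zero, so there is a canonical chain map $\varepsilon^- \colon \T^- \to \id$ given by the identity in degree zero; dually $\T^+$ from \eqref{eq:P2} comes with a canonical unit $\eta^+ \colon \id \to \T^+$. The natural candidate for the equivalence is the composite-type map obtained by splicing $\T^-$ and $\T^+$ along $\id$: concretely, one forms the chain map $\T^- \xrightarrow{\varepsilon^-} \id \xrightarrow{\eta^+} \T^+$ and shows its cone is contractible. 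Equivalently, since both $\T^\pm$ are biadjoint (the left/right adjoint of $\P^{(\l)} \Q^{(\l^t)}$ is $\P^{(\l^t)} \Q^{(\l)}$, and transposition is an involution on partitions, so the underlying object-list of $\T^+$ is the adjoint-reverse of that of $\T^-$), it suffices to check that $\eta^+ \circ \varepsilon^-$ induces an isomorphism on homology in the appropriate sense, i.e. that the two "error terms" $\Cone(\varepsilon^-)$ and $\Cone(\eta^+)$ are homotopy equivalent.

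The key mechanism is the following: let $\T^{-}_{>} := \Cone(\varepsilon^-)[-1]$ be the subcomplex of $\T^-$ obtained by deleting the degree-zero term $\id$; by inspection of \eqref{eq:P1}, every object appearing in $\T^{-}_{>}$ is of the form $\P^{(\mu)}\Q^{(\mu^t)}$ with $\mu \neq \varnothing$, hence is a summand of $\P^{(\mu)} \Q^{(\mu^t)}$ with the $\P$ factor nontrivial — so Conjecture \ref{conj:3} (applied on the right, after using biadjointness to move the $\Q$-factors) gives $\T^+ \otimes \T^{-}_{>} \cong 0$, and symmetrically $\T^{+}_{<} \otimes \T^{-} \cong 0$ where $\T^{+}_{<} := \Cone(\eta^+)$. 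Tensoring the triangle $\T^{-}_{>} \to \T^- \xrightarrow{\varepsilon^-} \id$ on the left with $\T^+$ then yields $\T^+ \otimes \T^- \cong \T^+$; tensoring the triangle $\id \xrightarrow{\eta^+} \T^+ \to \T^{+}_{<}$ on the right with $\T^-$ yields $\T^+ \otimes \T^- \cong \T^-$. Composing these two isomorphisms gives $\T^- \cong \T^+$, which is the statement. (This is the categorical analogue of the elementary fact that two idempotents $e,f$ with $eM = M = fM$ on the relevant module satisfy $e = ef = f$.)

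The main obstacle is twofold. First, the argument above is genuinely \emph{conditional} on Conjecture \ref{conj:3} — it uses $\T^{\pm}\P^{(\l)} \cong 0$ for nontrivial $\l$ in an essential way — so an unconditional proof would have to first settle Conjecture \ref{conj:3}, presumably by the same kind of cancellation-Lemma bookkeeping (Lemma \ref{lem:cancel}) used in Section \ref{sec:exuntwisted}, organized via the bar-type differential of \eqref{eq:P1} and the branching rule \cite[Lemma 5]{CL1}. Second, and more delicately, one must verify that the isomorphisms $\T^+\otimes\T^-\cong\T^+$ and $\T^+\otimes\T^-\cong\T^-$ produced above are \emph{compatible} — i.e. that the resulting self-map of $\T^-$ (or $\T^+$) is the identity up to homotopy, not merely some homotopy automorphism — which requires tracking that $\varepsilon^-$ and $\eta^+$ are inverse to the structural unit/counit on the degree-zero strand; here the uniqueness-up-to-homotopy of the differentials in \eqref{eq:P1} (the analogue of \cite[Prop. 4.7]{CLS}) should do the job, but making it precise is the part I expect to be technical. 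A cleaner alternative worth attempting in parallel is to show directly that the map $\T^- \xrightarrow{\varepsilon^-} \id \xrightarrow{\eta^+} \T^+$ has contractible cone by a single filtration/spectral-sequence argument on the combined complex indexed by $\coprod_d \{\l \vdash d\}$, using that the only partition contributing to homology is the empty one.
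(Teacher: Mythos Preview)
The paper does not actually prove Conjecture \ref{conj:4}; it is left open. What the paper offers is illustrative evidence: under the truncation $\Q^n=0$ for $n\ge 2$ (and, it reports, also for $n>2$) one computes $\T^-\T^+$ explicitly, simplifies with the cancellation Lemma \ref{lem:cancel}, and observes that cancelling the matching $\P\Q$ summands in one order leaves $\T^+$ while cancelling in the other order leaves $\T^-$. No general argument is given; the text only says ``We suspect that this argument will work in general.''

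So your proposal is not competing with a proof but with a heuristic, and in fact the underlying mechanism is the same: both you and the paper pass through the product $\T^+\T^-$ (resp.\ $\T^-\T^+$) and argue it is homotopy equivalent to each factor separately. The paper does this by brute-force cancellation inside a truncation; you do it abstractly by invoking Conjecture \ref{conj:3} to annihilate the ``augmentation tails'' $\T^-_{>}$ and $\T^+_{<}$ after tensoring. Your route is cleaner and, if it goes through, yields the full statement --- but it trades one open conjecture for another, reducing Conjecture \ref{conj:4} to Conjecture \ref{conj:3}, whereas the paper's calculation is unconditional but only handles small truncations. The compatibility issue you flag (that the two isomorphisms $\T^+\T^-\cong\T^\pm$ must be checked to match) is real and is exactly the subtlety hiding behind the paper's ``cancel in two different orders'' step. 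One further point you should watch: $\T^-_{>}$ is unbounded in $\Kom(\H)$, so the implication ``$\T^+\otimes X\cong 0$ for every term $X$ of $\T^-_{>}$, hence $\T^+\otimes\T^-_{>}\cong 0$'' needs a convergence or boundedness argument; this is automatic when acting on $\V_{Fock}$ (where large $\Q$-powers vanish) but is not obviously so in $\Kom(\H)$ itself.
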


Let us illustrate how this might be possible despite the fact that $\T^+$ is supported in cohomological degrees $\ge 0$ and $\T^-$ in degrees $\le 0$. Suppose $\Q^n = 0$ for $n \ge 2$. Then $\T^- \cong [\P \Q \rightarrow \id]$ while $\T^+ \cong [\id \rightarrow \P \Q]$. Subsequently
\begin{align*}
\T^- \T^+
&\cong [\P \Q \rightarrow \id \oplus \P \Q \P \Q \rightarrow \P \Q] \\
&\cong [\P \Q \rightarrow \id \oplus \P \Q \rightarrow \P \Q].
\end{align*}
Now, one can check that both maps are isomorphisms between the $\P \Q$ summands. Thus one can use Lemma \ref{lem:cancel} to cancel out the two left-hand $\P \Q$ to obtain $[\id \rightarrow \P \Q] \cong \T^+$. On the other hand one can cancel out the two right-hand $\P \Q$  to obtain $[\P \Q \rightarrow \id] \cong \T^-$. Hence $\T^+ \cong \T^-$. We checked that the same argument works to prove Conjecture \ref{conj:4} if $\Q^n=0$ for $n>2$. We suspect that this argument will work in general.

\section{The twisted Boson-Fermion correspondence}

\subsection{Twisted Heisenberg algebra $\th$}

The twisted Heisenberg algebra $\th$ we are interested in is an associative unital algebra with generators $h_{m/2}$ for $ n \in 2\Z+1$ and relations $[h_{\frac{n}{2}}, h_{\frac{m}{2}}] = \frac{n}{2} \delta_{n,-m}$. 

As with $\h$ there is another way to present $\th$ by using generators $p^{(m)}$ and $q^{(m)}$ defined as follows:
\begin{equation}\label{eq:def1}
\sum_{m \in \frac{1}{2}\N} p^{(2m)} z^{m} = \exp \left( \sum_{m \in \N+ \frac{1}{2}} \frac{h_{-m}}{m} z^m \right) \ \ \text{ and }  \ \ \sum_{m \in \frac{1}{2}\N} q^{(2m)} z^m = \exp \left( \sum_{m \in \N+\frac{1}{2}} \frac{h_{m}}{m} z^m \right).
\end{equation}

\begin{prop}\label{prop:3}
The Heisenberg algebra $\th$ is generated by $p^{(m)}$ and $q^{(m)}$, $m \in \N$ with relations
\begin{enumerate}
\item $ p^{(n)} p^{(m)} = p^{(m)} p^{(n)} $ and $q^{(n)} q^{(m)} = q^{(m)} q^{(n)}$ for any $m,n$,
\item $ \sum_{k=0}^n p^{(2k)} p^{(2n-2k)} = \sum_{k=0}^{n-1} p^{(2k+1)} p^{(2n-2k-1)} $ and similarly for $q$'s, 
\item $ q^{(n)} p^{(m)} = p^{(m)} q^{(n)} + \sum_{k \ge 1} 2 p^{(m-k)} q^{(n-k)}$.
\end{enumerate}
\end{prop}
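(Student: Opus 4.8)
The plan is to argue in two stages: first verify that relations~(1)--(3) hold in $\th$, and then verify that they constitute a complete presentation. Throughout I keep the conventions of Proposition~\ref{prop:1}, namely $p^{(0)}=q^{(0)}=1$ and $p^{(m)}=q^{(m)}=0$ for $m<0$, and I package the generators into the formal series $P(w):=\sum_{m\ge 0}p^{(m)}w^m$ and $Q(w):=\sum_{m\ge 0}q^{(m)}w^m$, which after the substitution $z=w^2$ are precisely the two exponentials in~(\ref{eq:def1}). The crucial observation is that in $P(w)=\exp\!\bigl(\sum_{j\in\N+\tfrac12}\tfrac{h_{-j}}{j}w^{2j}\bigr)$ only odd powers of $w$ occur in the exponent, since each $2j$ is odd.

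For the first stage, relation~(1) is immediate: all $p^{(m)}$ lie in the commutative subalgebra $\k[h_{-1/2},h_{-3/2},\dots]\subset\th$, and all $q^{(m)}$ lie in $\k[h_{1/2},h_{3/2},\dots]$. Relation~(2) I would extract from the single identity $P(w)P(-w)=1$, which holds because the exponent of $P$ is an odd series in $w$, so that $P(-w)=P(w)^{-1}$; its coefficient of $w^{2n}$ ($n\ge 1$) is exactly relation~(2), while the odd-degree coefficients vanish automatically once~(1) is available. For relation~(3) I would use that the exponents of $Q(w_1)$ and $P(w_2)$ have a central (scalar) commutator, $\bigl[\sum_j\tfrac{h_j}{j}w_1^{2j},\ \sum_{j'}\tfrac{h_{-j'}}{j'}w_2^{2j'}\bigr]=\sum_{j\in\N+\tfrac12}\tfrac1j(w_1w_2)^{2j}$, so that the Baker--Campbell--Hausdorff formula gives $Q(w_1)P(w_2)=P(w_2)Q(w_1)\exp\!\bigl(\sum_{j\in\N+\tfrac12}\tfrac1j(w_1w_2)^{2j}\bigr)$. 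The key computation is $\sum_{j\in\N+\tfrac12}\tfrac{t^{2j}}{j}=2\sum_{k\ge 1}\tfrac{t^{2k-1}}{2k-1}=\log\tfrac{1+t}{1-t}$, so the correction factor equals $\tfrac{1+w_1w_2}{1-w_1w_2}=1+2\sum_{k\ge 1}(w_1w_2)^k$; comparing the coefficient of $w_1^nw_2^m$ on both sides then produces relation~(3) verbatim, including the factor of $2$.

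For the second stage, let $\th'$ be the abstract unital associative algebra on generators $p^{(m)},q^{(m)}$ ($m\in\N$) subject to~(1)--(3); the first stage provides a surjection $\phi\colon\th'\twoheadrightarrow\th$. I would first note that~(1) and~(3) let one normal-order any product into a $\k$-linear combination of monomials $p^{(\lambda)}q^{(\mu)}$ (all $p$'s to the left, all $q$'s to the right), since the correction terms in~(3) strictly decrease total degree; then~(2), rewritten as $2p^{(2n)}=\sum_{k=0}^{n-1}p^{(2k+1)}p^{(2n-2k-1)}-\sum_{k=1}^{n-1}p^{(2k)}p^{(2n-2k)}$, recursively expresses every even-indexed $p^{(2n)}$ as a polynomial in $p^{(1)},p^{(3)},\dots,p^{(2n-1)}$ (and similarly for the $q$'s). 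Hence $\th'$ is spanned by $\{p^{(\lambda)}q^{(\mu)}\}$ with $\lambda,\mu$ partitions into odd parts. On the other side, reading off the leading term of the generating series gives $p^{(2k-1)}=\tfrac{2}{2k-1}h_{-(2k-1)/2}+(\text{a polynomial in the }h_{-j}\text{ with }j<\tfrac{2k-1}{2})$, so the change of variables between $\{h_{-(2k-1)/2}\}_{k\ge1}$ and $\{p^{(2k-1)}\}_{k\ge1}$ is triangular and invertible; therefore $\{p^{(\lambda)}\}$, for $\lambda$ a partition into odd parts, is a $\k$-basis of $\k[h_{-1/2},h_{-3/2},\dots]$, and likewise $\{q^{(\mu)}\}$ for $\k[h_{1/2},h_{3/2},\dots]$. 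Since the multiplication map $\k[h_{-1/2},\dots]\otimes_{\k}\k[h_{1/2},\dots]\to\th$ is a linear isomorphism (PBW for the twisted Heisenberg algebra), $\phi$ carries the spanning set $\{p^{(\lambda)}q^{(\mu)}\}$ bijectively onto a $\k$-basis of $\th$. A surjective linear map sending a spanning set bijectively onto a basis is an isomorphism, so $\phi$ is an isomorphism and~(1)--(3) is a complete presentation.

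The step I expect to be the main obstacle is the bookkeeping in the second stage: one must be sure that relation~(2) eliminates the even-indexed generators with \emph{no residual relation left among the odd-indexed ones}, and then correctly match the resulting spanning set with an honest basis of $\th$. This is exactly where the triangularity of the $h\leftrightarrow p$ change of variables and PBW are needed; once they are in place the injectivity of $\phi$ is automatic. (The first stage is a routine generating-function manipulation; its one slightly delicate point is the evaluation $\sum_{j\in\N+\tfrac12}t^{2j}/j=\log\tfrac{1+t}{1-t}$, whose ``doubled'' expansion $1+2t+2t^2+\cdots$ accounts for the coefficient $2$ in~(3) that distinguishes it from relation~(2) of Proposition~\ref{prop:1}.) I would also remark that the subalgebra generated by the $p^{(m)}$ under~(1)--(2) is the classical ring of ``twisted'' symmetric functions --- the linear span of Schur $Q$-functions, with $p^{(m)}$ corresponding to a one-row $Q$-function up to a scalar --- so this half of the presentation is standard and could alternatively be cited.
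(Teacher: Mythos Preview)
Your first stage is essentially the paper's proof: relation~(1) is immediate, relation~(2) is read off from the identity $P(w)P(-w)=1$ (the paper writes this out as the product of the two exponentials being~$1$), and relation~(3) comes from the vertex-operator commutation identity with correction factor $\tfrac{1+w_1w_2}{1-w_1w_2}=1+2\sum_{k\ge1}(w_1w_2)^k$ (the paper cites \cite[Prop.~3.4.1]{FLM} for this step, while you derive it directly via Baker--Campbell--Hausdorff; the content is identical).

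Your second stage, however, goes beyond the paper. The paper only checks that relations (1)--(3) \emph{hold} in $\th$; it does not verify that they constitute a \emph{complete} presentation. Your argument for completeness --- normal-order using (1) and (3), eliminate even-indexed generators via~(2), and then match the resulting spanning set of $\th'$ against an honest basis of $\th$ using the triangular change of variables $\{p^{(2k-1)}\}\leftrightarrow\{h_{-(2k-1)/2}\}$ together with PBW --- is correct, and is in fact what is implicitly needed later when the paper defines the map $\th\to K_0(\tH)$ in the Corollary following Theorem~\ref{thm:2}. Your remark identifying the $p$-subalgebra with the ring spanned by Schur $Q$-functions is also apt and could shorten this half of the argument by citation. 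One small expositional point: you call $\phi$ a surjection at the start of the second stage, but surjectivity (that the $p^{(m)},q^{(m)}$ generate $\th$) is only established by your subsequent triangularity argument; it would be cleaner to write ``homomorphism'' there and record surjectivity once the basis comparison is in hand.
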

\begin{proof}
The first set of relations are clear. The second relation follows from 
\begin{align*}
\sum_{n \in \mathbb{Z}_{\geq 0}} \sum_{k=0}^n (-1)^k p^{(2n-k)} p^{(k)} z^n &=
(\sum_{n \in \frac{1}{2} \mathbb{N}} p^{(2n)} z^n)(\sum_{n \in \frac{1}{2} \mathbb{N}} (-1)^{2n} p^{(2n)} z^n) \\
&= \exp(\sum_{n \in \mathbb{N} + \frac{1}{2}} \frac{h_{-n}}{n} z^n)\exp(-\sum_{n \in \mathbb{N} + \frac{1}{2}} \frac{h_{-n}}{n} z^n) \\
&=1.
\end{align*}
To see the third relation note that by ~\cite[Proposition 3.4.1]{FLM} we have
\begin{equation}
\label{vertexcomm}
[\sum_{{m} \in \frac{1}{2}\N} q^{(2m)} z_1^{m}][\sum_{{m} \in \frac{1}{2}\N} p^{(2m)} z_2^{m}]=[\sum_{{m} \in \frac{1}{2}\N} p^{(2m)} z_2^{m}][\sum_{{m} \in \frac{1}{2}\N} q^{(2m)} z_1^{m}] \left( \frac{1+z_2^{\frac{1}{2}}z_1^{\frac{1}{2}}}{1-z_2^{\frac{1}{2}}z_1^{\frac{1}{2}}} \right).
\end{equation}
It is not hard to check that 
$$\frac{1+z_2^{\frac{1}{2}}z_1^{\frac{1}{2}}}{1-z_2^{\frac{1}{2}}z_1^{\frac{1}{2}}} = 1+2z_2^{\frac{1}{2}}z_1^{\frac{1}{2}} + 2z_2z_1^{}+ 2z_2^{\frac{3}{2}} z_1^{\frac{3}{2}} + \cdots.$$
Substituting this expansion into \eqref{vertexcomm} gives the commutator relation.
\end{proof}

One could also define elements $ p^{(1^m)} $ and $ q^{(1^m)} $ by
$$\sum_{m \in \frac{1}{2}\N} (-1)^{2m} p^{(1^{2m})} z^{m} = \exp \left( - \sum_{m \in \N+ \frac{1}{2}} \frac{h_{-m}}{m} z^m \right) \ \ \text{ and }  \ \ \sum_{m \in \frac{1}{2}\N} (-1)^{2m} q^{(1^{2m})} z^m = \exp \left( - \sum_{m \in \N+\frac{1}{2}} \frac{h_{m}}{m} z^m \right).$$
However, it is not difficult to see that, unlike in the untwisted case, these give the same generators as equation (\ref{eq:def1}) -- meaning that $ p^{(1^m)}=p^{(m)} $ and $ q^{(1^m)}=q^{(m)}$. The reason for this is essentially that the coefficients of $z^m$ on the right sides are purely positive or purely negative sums of terms. 

Just like $\h$, the twisted Heisenberg $\th$ also has the irreducible Fock space representation $V_{Fock}$. 

\subsection{The twisted correspondence}

The twisted Clifford algebra $\cl^t$ has generators $\phi_i$ ($i \in \Z$) with relations 
\begin{equation}\label{eq:tcl1}
\phi_i \phi_j + \phi_j \phi_i = 2 \delta_{i,-j}.
\end{equation}

It will also be convenient to consider the algebra $\tcl^t$ which has generators $\phi_i$ and $\phi_i^*$ ($i \in \Z$) and relations
\begin{equation}\label{eq:tcl2}
\phi_i \phi_j + \phi_j \phi_i = 2 \delta_{i,-j} \ \ \phi_i^* \phi_j^* + \phi_j^* \phi_i^* = 2 \delta_{i,-j} \ \ \text{ and } \ \ \phi_i \phi_j^* + \phi_j^* \phi_i = 2 \delta_{i,j}. 
\end{equation}
Note that $\tcl^t$ contains $\cl^t$ but we also have a surjective map $\tcl^t \rightarrow \cl^t$ given by $\phi_i \mapsto \phi_i$ and $\phi_i^* \mapsto \phi_{-i}$. The algebra $\tcl^t$ appears in \cite{J} where it is studied in the decategorified setup of our story. 

\begin{theorem}[Twisted Boson-Fermion correspondence]\label{thm:B}
One can define an action of $\cl^t$ on $V = V_{Fock}$ by $\phi_i(v) := C_i(v)$ where 
\begin{equation}\label{eq:C3} 
C_i = \begin{cases}
\sum_{k \ge 0} (-1)^{k} p^{(k)} q^{(i+k)} & \text{ if } i \ge 0 \\ 
\sum_{k \ge 0} (-1)^{i+k} p^{(-i+k)} q^{(k)} & \text{ if } i \le 0
\end{cases} 
\end{equation}
\end{theorem}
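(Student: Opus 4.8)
The plan is to verify the relation $\phi_i \phi_j + \phi_j \phi_i = 2\delta_{i,-j}$ directly by expanding both products using the definition (\ref{eq:C3}) and the commutation relation from Proposition \ref{prop:3}. The key tool is part (3) of Proposition \ref{prop:3}, namely $q^{(n)} p^{(m)} = p^{(m)} q^{(n)} + \sum_{k \ge 1} 2 p^{(m-k)} q^{(n-k)}$, which lets us move every $q$ past every $p$ so that both $\phi_i \phi_j$ and $\phi_j \phi_i$ are written in a normal form $\sum p^{(\bullet)} p^{(\bullet)} q^{(\bullet)} q^{(\bullet)}$ (all $p$'s, being mutually commuting by part (1), collected on the left, all $q$'s on the right). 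Since $p$'s commute and $q$'s commute, comparing coefficients becomes a bookkeeping problem with signs.

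First I would treat the generic case where $i + j \ne 0$, and for concreteness take, say, $i \ge 0$ and $j \ge 0$ (the mixed-sign and both-negative cases are handled identically, or reduced to this one by noting the symmetry in the definitions). Write $\phi_i = \sum_{a \ge 0} (-1)^a p^{(a)} q^{(i+a)}$ and $\phi_j = \sum_{b \ge 0} (-1)^b p^{(b)} q^{(j+b)}$. Then $\phi_i \phi_j = \sum_{a,b} (-1)^{a+b} p^{(a)} q^{(i+a)} p^{(b)} q^{(j+b)}$; applying the normal-ordering relation to $q^{(i+a)} p^{(b)}$ yields $p^{(a)} p^{(b)} q^{(i+a)} q^{(j+b)} + 2\sum_{k\ge 1} p^{(a)} p^{(b-k)} q^{(i+a-k)} q^{(j+b)}$. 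The symmetric expression $\phi_j \phi_i$ is obtained by swapping the roles. Summing the two and regrouping by the multiset $\{p^{(\alpha)}, p^{(\beta)}\}$ and $\{q^{(\gamma)}, q^{(\delta)}\}$ appearing, I expect a telescoping/cancellation identity purely among binomial-type sums with alternating signs, whose vanishing encodes $\phi_i\phi_j + \phi_j\phi_i = 0$. Here the second relation in Proposition \ref{prop:3}, $\sum_k p^{(2k)}p^{(2n-2k)} = \sum_k p^{(2k+1)}p^{(2n-2k-1)}$ (i.e. $\sum_{s}(-1)^s p^{(s)} p^{(\text{total}-s)} = 0$), will likely be exactly what makes the leftover terms cancel.

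Next I would handle the case $i + j = 0$, which is where the $2\delta_{i,-j}$ appears. Taking $i \ge 0$ and $j = -i \le 0$, so $\phi_{-i} = \sum_b (-1)^{i+b} p^{(i+b)} q^{(b)}$, the same normal-ordering computation goes through but now one term in the double sum is $q^{(0)} p^{(0)} = 1\cdot 1 = 1$-type, or more precisely the normal-ordering corrections produce a constant term $2$ that survives after all the other (partition-indexed) coefficients cancel by the same identities as in the generic case. I would isolate the scalar contribution carefully, tracking the sign $(-1)^{a+b}$ against $(-1)^{i+b}$ and the factor $2$ from the commutation relation, to confirm it totals $+2$.

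The main obstacle will be the combinatorial bookkeeping in step two: organizing the double (really quadruple, after expanding the $\sum_{k \ge 1}$ corrections) sums so that the cancellations are manifest, and correctly matching terms that come from $q^{(i+a)}p^{(b)}$ normal-ordering in $\phi_i\phi_j$ against those from $q^{(j+b)}p^{(a)}$ in $\phi_j\phi_i$ — the indices shift in opposite directions and the sign conventions must be reconciled. A cleaner alternative, which I would pursue if the direct computation gets unwieldy, is to use generating functions: package $\Phi(z) := \sum_i \phi_i z^i$ (or the appropriate half-integer-powered vertex operator), express it as the product of $\exp$'s of the $h_{\pm m}$ as in (\ref{eq:def1}), and then read off $\Phi(z)\Phi(w) + \Phi(w)\Phi(z)$ from the operator product expansion governed by (\ref{vertexcomm}); the factor $\frac{1+(zw)^{1/2}}{1-(zw)^{1/2}}$ plus its $z \leftrightarrow w$ counterpart should produce a delta function supported on $zw = 1$ with the correct normalization $2$. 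This is, of course, the classical proof, and is what \cite{J} or \cite{FLM} would reference; I would cite those for the details rather than grind through the formal-distribution manipulation in full.
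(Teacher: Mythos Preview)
Your proposal is sound but takes a different route from the paper. The paper's own proof is simply a citation to \cite{An} (specifically equation 14 there), together with the observation that the sign convention in \cite{An} differs from (\ref{eq:tcl1}) --- their relation reads $\phi_i \phi_j + \phi_j \phi_i = 2(-1)^i \delta_{i,-j}$ --- and that rescaling each $\phi_i$ for $i > 0$ by $(-1)^i$ reconciles the two (and allows $V = V_{Fock}$ rather than $\k[e^{\pm\alpha}]/(e^{2\alpha}-1)\otimes_\k V_{Fock}$). Your direct combinatorial verification via Proposition~\ref{prop:3} would work and is more self-contained, but is considerably more laborious; your fallback generating-function/vertex-operator argument is essentially what the cited literature does, so citing it, as the paper does, is the expedient choice. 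The paper's approach buys brevity at the cost of relying on an external reference; yours would make the argument internal to the presentation of $\th$ given here, which has some expository value but is not what the authors opted for.
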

\begin{proof}
This is essentially proved in \cite[eq. 14]{An}. The only difference is that in their definition of $\cl^t$, relation (\ref{eq:tcl1}) above is actually $\phi_i \phi_j + \phi_j \phi_i = 2 (-1)^i \delta_{i,-j}$. It is easy to see that this is equivalent since one can rescale each $\phi_i$ for $i > 0$ by $(-1)^i$. The advantage of (\ref{eq:tcl1}) is that we can take $V = V_{Fock}$ whereas in \cite{An} one takes $V = \k[e^{\pm \alpha}]/(e^{2 \alpha}-1) \otimes_\k V_{Fock}$. 
\end{proof}

Note that, in contrast to the untwisted case from Theorem \ref{thm:A}, we only have $\C_i$ instead of $\C_i^+$ and $\C_i^-$ since we do not have operators $p^{(1^n)}$ and $q^{(1^n)}$ in the twisted case. 

The algebra $\cl^t$ is usually referred to as the Clifford algebra of type $B_{\infty}$. For more details on the twisted Boson-Fermion correspondence see \cite{F} or \cite{DJKM}. In \cite{An} it is shown that this correspondence can be realized as an isomorphism of twisted vertex operator algebras. 

\section{The categorical twisted Boson-Fermion correspondence}

\subsection{The Heisenberg category $\tH$}\label{sec:tH}

We now introduce the twisted analogue of $\H$. The category $\tH$ is a $\k$-linear, $\Z/2\Z$-graded idempotent complete monoidal category, generated by objects $\P$ and $\Q$. The diagrammatic conventions are the same as those for $\H$ so that, in particular, $\P$ and $\Q$ are denoted by an upward and downward pointing strand.

The morphisms in $\tH$ are generated by diagrams as in (\ref{eq:maps}) but there are extra morphisms $\P \rightarrow \P \{1\}$ and $\Q \rightarrow \Q \{1\}$ where $\{1\}$ denotes the $\Z/2\Z$ grading shift. This extra map is illustrated by a hollow circle like this
$$ \begin{tikzpicture}[>=stealth]
\draw [->](0,0) -- (0,1) [thick];
\draw [black](0,.4) circle (2pt);
\end{tikzpicture} $$

These maps satisfy the same relations as in (\ref{eq:rel1}) and (\ref{eq:rel3}) and the right hand relation in (\ref{eq:rel2}). The left hand relation in (\ref{eq:rel2}) becomes

\begin{equation} \label{eq:rel2B}
\begin{tikzpicture}[>=stealth]
\draw (0,0) .. controls (1,1) .. (0,2)[<-][thick];
\draw (1,0) .. controls (0,1) .. (1,2)[->] [thick];
\draw (1.5,1) node {=};
\draw (2,0) --(2,2)[<-][thick];
\draw (3,0) -- (3,2)[->][thick];
\draw (3.5,1) node{$-$};
\draw (4,1.75) arc (180:360:.5) [thick];
\draw (4,2) -- (4,1.75) [thick];
\draw (5,2) -- (5,1.75) [thick][<-];
\draw (5,.25) arc (0:180:.5) [thick];
\draw (5,0) -- (5,.25) [thick];
\draw (4,0) -- (4,.25) [thick][<-];

\draw (5.5,1) node{$-$};
\draw (6,1.75) arc (180:360:.5) [thick];
\draw (6,2) -- (6,1.75) [thick];
\draw (7,2) -- (7,1.75) [thick][<-];
\draw (7,.25) arc (0:180:.5) [thick];
\draw (7,0) -- (7,.25) [thick];
\draw (6,0) -- (6,.25) [thick][<-];
\draw [black] (7,.1) circle (2pt);
\draw [black] (6,1.75) circle (2pt);
\end{tikzpicture}
\end{equation}

Finally, we have the following extra relations involving the new morphism:
\begin{equation}\label{eq:t1}
\begin{tikzpicture}[>=stealth]
\draw (0,0) -- (1,1)[->][thick];
\draw (1,0) -- (0,1)[->][thick];
\draw [black] (.25,.25) circle (2pt);
\draw (1.5,.5) node{=};
\draw (2,0) -- (3,1)[->][thick];
\draw (3,0) -- (2,1)[->][thick];
\draw [black] (2.75, .75) circle (2pt);

\draw (6,0) -- (7,1)[->][thick];
\draw (7,0) -- (6,1)[->][thick];
\draw [black] (6.25,.75) circle (2pt);
\draw (7.5,.5) node{=};
\draw (8,0) -- (9,1)[->][thick];
\draw (9,0) -- (8,1)[->][thick];
\draw [black] (8.75, .25) circle (2pt);
\end{tikzpicture}
\end{equation}

\begin{equation}\label{eq:t2}
\begin{tikzpicture}[>=stealth]
\draw (0,0) -- (0,-.5) [->][thick];
\draw [black] (0,-.25) circle (2pt);
\draw (1,0) -- (1,-.5) [thick];
\draw (0,0) arc (180:0:.5) [thick];
\draw (1.5,0) node {=};
\draw (1.8,0) node {$-$};
\draw (2,0) -- (2,-.5)[->] [thick];
\draw [black] (3,-.25) circle (2pt);
\draw (3,0) -- (3,-.5) [thick];
\draw (2,0) arc (180:0:.5) [thick];

\draw (5,0) -- (5,-.5) [thick];
\draw [black] (5,-.25) circle (2pt);
\draw (6,0) -- (6,-.5)[->] [thick];
\draw (5,0) arc (180:0:.5) [thick];
\draw (6.5,0) node {=};
\draw (7,0) -- (7,-.5) [thick];
\draw [black] (8,-.25) circle (2pt);
\draw (8,0) -- (8,-.5) [->][thick];
\draw (7,0) arc (180:0:.5) [thick];
\end{tikzpicture}
\end{equation}

\begin{equation}\label{eq:t3}
\begin{tikzpicture}[>=stealth]
\draw (0,0) -- (0,.5) [thick];
\draw [black] (0,.25) circle (2pt);
\draw (1,0) -- (1,.5) [->][thick];
\draw (0,0) arc (180:360:.5) [thick];
\draw (1.5,0) node {=};
\draw (2,0) -- (2,.5)[thick];
\draw [black] (3,.25) circle (2pt);
\draw (3,0) -- (3,.5) [->][thick];
\draw (2,0) arc (180:360:.5) [thick];

\draw (5,0) -- (5,.5)[->] [thick];
\draw [black] (5,.25) circle (2pt);
\draw (6,0) -- (6,.5) [thick];
\draw (5,0) arc (180:360:.5) [thick];
\draw (6.5,0) node {=};
\draw (6.8,0) node {$-$};
\draw (7,0) -- (7,.5)[->] [thick];
\draw [black] (8,.25) circle (2pt);
\draw (8,0) -- (8,.5) [thick];
\draw (7,0) arc (180:360:.5) [thick];
\end{tikzpicture}
\end{equation}

\begin{equation}\label{eq:t4}
\begin{tikzpicture}[>=stealth]
\draw (2.5,-1) node{$-$};
\draw (0,0) -- (0,-2)[<-][thick];
\draw (.5,-1) node{$=$};
\draw (1,0) -- (1,-2)[<-][thick];
\draw [black] (1, -.5) circle (2pt);
\draw [black] (1, -1.5) circle (2pt);

\draw (3,0) -- (3,-2)[->][thick];
\draw (3.5,-1) node{$=$};
\draw (4,0) -- (4,-2)[->][thick];
\draw [black] (4, -.5) circle (2pt);
\draw [black] (4, -1.5) circle (2pt);

\draw [shift={+(6,-1)}](0,0) arc (180:360:0.5cm) [thick];
\draw [shift={+(6,-1)}][->](1,0) arc (0:180:0.5cm) [thick];
\draw [black](7,-1) circle (2pt);
\draw [shift={+(6,-1)}](1.75,0) node{$= 0$};
\end{tikzpicture}
\end{equation}

\begin{equation}\label{eq:t5}
\begin{tikzpicture}[>=stealth]
\draw (0,1) -- (0,3)[->][thick];
\draw (1,1) -- (1,3)[->][thick];
\draw [black] (0, 2.5) circle (2pt);
\draw [black] (1, 1.5) circle (2pt);
\draw (.5,2) node{$\cdots$};
\draw [shift={+(-1,1)}](2.5,1) node{$=$};
\draw [shift={+(-2.8,1)}](4.8,1) node{$-$};
\draw [shift={+(-2.8,1)}](5,0) -- (5,2)[->][thick];
\draw [shift={+(-2.8,1)}](6,0) -- (6,2)[->][thick];
\draw [shift={+(-2.8,1)}][black] (5, .5) circle (2pt);
\draw [shift={+(-2.8,1)}][black] (6, 1.5) circle (2pt);
\draw [shift={+(-2.8,1)}](5.5,1) node{$\cdots$};
\end{tikzpicture}
\end{equation}

This last relation states that dots far apart supercommute which is something characteristic of superalgebras. If $\A$ is an object in $\tH$ we will use $\A \{s_1,\dots,s_k\}$ as short-hand for $\A \{s_1\} \oplus \dots \oplus \A \{s_k\}$. 

\subsection{Sergeev algebras}\label{sec:sergeev}

In contrast to the untwisted case where only the symmetric group algebra $\k[S_n]$ acts on $\Q^n$, in this case we also have the dots. These dots generate an action of the Clifford algebra $\Cliff_n$ which has generators $c_1, \dots, c_n$ with relations $ c_i^2 = -1 $ and $ c_i c_j = - c_j c_i$ for $ i\ne j$. 

More precisely, $\k[S_n]$ still acts on $\Q^n$ (by crossings) and we find that the semi-direct product 
$$\Se_n := \Cliff_n \rtimes \k[S_n]$$
acts on $\Q^n$ where $S_n$ acts on $\Cliff_n$ by permuting its generators. The algebra $\Se_n$ is called the Sergeev algebra. It is $\Z/2\Z$-graded where $|c_i|=1$ and $|s_i|=0$ (here $s_1, \dots, s_{n-1}$ are the standard generators of $S_n$ and $|\cdot|$ denotes the degree). Notice that on $\P^n$ we have an action of the opposite Sergeev algebra $\Se^{opp}$. Recall that the opposite of a superalgebra is defined by $a*b=(-1)^{|a||b|} ba$ which explains why two hollow dots on an upward strand are equal to $+1$ instead of $-1$. 

The super representation theory of $ \Se_n $ is equivalent to the projective representation theory of $S_n$. Thus finite dimensional irreducible super representations of $\Se_n$ are indexed by strict partitions of $n$. 

In \cite{N} Nazarov constructed quasi-idempotents in $\Se_n$ indexed by strict partitions.  An element $x \in \Se_n$ is quasi-idempotent if $ x^2=kx$ for some non-zero $k \in \k$.  His methods parallel Cherednik's construction of idempotents in $\k[S_n]$ using the degenerate affine Hecke algebra. We will use a different construction of quasi-idempotents $e_\l$ given later by Sergeev in \cite{Ser}. If $\l = (n)$ the element $e_{(n)}$ has the familiar expression $e_{(n)} = \frac{1}{n!} \sum_{w \in S_n} w$. It is easy to check that $e_{(n)}^2=e_{(n)}$ and $s_k e_{(n)} = e_{(n)} = e_{(n)} s_k$ for $1 \le k \le n-1$. 

One may also wonder what happens with the idempotent $e_{(1^n)} = \frac{1}{n!} \sum_{w \in S_n} (-1)^{l(w)} w$. It turns out this idempotent leads to equivalent representations. 

\begin{lemma}\label{lem:idempotent}
There is an isomorphism of $\Se_n$-modules $f: \Se_n e_{(n)} \cong \Se_n e_{(1^n)} \{n\}$.
\end{lemma}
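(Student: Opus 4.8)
The plan is to produce an explicit $\Se_n$-module isomorphism $f\colon \Se_n e_{(n)} \to \Se_n e_{(1^n)}\{n\}$ by right multiplication against a suitable element of $\Se_n$. The natural candidate is an element $c$ built entirely from the Clifford generators, since multiplication by a group-algebra element would only permute the two idempotents among their $S_n$-conjugates (and $e_{(n)}$, $e_{(1^n)}$ are already $S_n$-invariant up to sign), whereas the Clifford part is exactly what distinguishes the two sectors and carries the $\{n\}$ grading shift. Concretely, I would try $f(x) = x \cdot c_1 c_2 \cdots c_n$ (or, more symmetrically, the element $\prod_i (1+\text{something})$, but the monomial $c_1\cdots c_n$ is the cleanest first guess). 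Note $c_1\cdots c_n$ has $\Z/2\Z$-degree $n$, which accounts for the shift $\{n\}$, and $(c_1\cdots c_n)^2 = \pm 1$, so right multiplication by it is invertible on $\Se_n$; hence $f$ is automatically an isomorphism of the underlying super vector spaces once it is well-defined, and it is a map of left $\Se_n$-modules since it is right multiplication.

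The content is therefore the single identity
\[
e_{(n)} \cdot (c_1 c_2 \cdots c_n) \;=\; (c_1 c_2 \cdots c_n) \cdot e_{(1^n)} \quad\text{(up to a nonzero scalar)},
\]
equivalently $e_{(n)} \Se_n (c_1\cdots c_n) = \Se_n e_{(1^n)}\{n\}$ as right ideals translated appropriately; spelled out, one needs that right multiplication by $c_1\cdots c_n$ sends the left ideal $\Se_n e_{(n)}$ onto $\Se_n e_{(1^n)}$. Since $\Se_n e_{(n)} = \Se_n \cdot e_{(n)}$ and $e_{(n)} = \frac{1}{n!}\sum_{w} w$, it suffices to check $e_{(n)}(c_1\cdots c_n) \in \Se_n e_{(1^n)}$ and conversely $e_{(1^n)}(c_1\cdots c_n)^{-1} \in \Se_n e_{(n)}$; by the $(c_1\cdots c_n)^2 = \pm1$ relation these two statements are equivalent, so only one direction is needed. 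The key computation is how a transposition $s_k$ conjugates $c_1\cdots c_n$: from the Sergeev relations $s_k c_i s_k = c_{s_k(i)}$, one gets $s_k (c_1\cdots c_n) = \epsilon_k (c_1\cdots c_n) s_k$ where $\epsilon_k \in \{\pm 1\}$ is a sign coming from reordering two adjacent Clifford generators past each other (each such swap contributes a $-1$), so in fact $\epsilon_k = -1$ for every adjacent transposition. Then
\[
e_{(n)}(c_1\cdots c_n) = \frac{1}{n!}\sum_w w (c_1\cdots c_n) = \frac{1}{n!}\sum_w (-1)^{\ell(w)} (c_1\cdots c_n)\, w = (c_1\cdots c_n)\, e_{(1^n)},
\]
using that $s_k (c_1\cdots c_n) = -(c_1\cdots c_n) s_k$ extends multiplicatively to give the sign $(-1)^{\ell(w)}$. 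This is precisely the desired identity, and it shows $f(\Se_n e_{(n)}) = \Se_n (c_1\cdots c_n) e_{(1^n)}\{n\}$; but $(c_1\cdots c_n) e_{(1^n)} = \pm e_{(n)}(c_1\cdots c_n) \cdot(\text{unit})$ already lies in $\Se_n e_{(1^n)}$ after applying the same reordering once more, or more directly, since $c_1 \cdots c_n$ is a unit, $\Se_n(c_1\cdots c_n) e_{(1^n)} = \Se_n e_{(1^n)}$ after absorbing the unit — wait, that last absorption must be done on the correct side, so I would instead argue: $f$ is injective because $c_1\cdots c_n$ is a unit in $\Se_n$, and $f(\Se_n e_{(n)}) \subseteq \Se_n e_{(1^n)}$ by the boxed identity, while $\dim \Se_n e_{(n)} = \dim \Se_n e_{(1^n)}$ (both quasi-idempotents of the same rank, or by applying the symmetric argument with roles reversed), forcing surjectivity.

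The main obstacle I anticipate is bookkeeping the signs: getting $\epsilon_k = -1$ exactly right requires care with the super (graded) commutation conventions in $\Se_n$ — in particular the relation $s_k c_k = c_{k+1} s_k$ versus $c_k c_{k+1} = -c_{k+1} c_k$ must be combined correctly, and one must verify the sign is independent of a reduced word for $w$ (it is, because it only depends on $\ell(w) \bmod 2$). A secondary, smaller point is confirming $(c_1\cdots c_n)^2 = (-1)^{n(n-1)/2} \cdot (-1)^n = \pm 1$ so that right multiplication is genuinely invertible; this is a routine reordering. If the monomial $c_1\cdots c_n$ turns out to produce the wrong grading behavior or fails to land in the correct ideal, the fallback is to use Sergeev's explicit description of $e_\l$ and conjugate $e_{(1^n)}$ into $e_{(n)}$ by an element interpolating between the two, but I expect the simple monomial to work.
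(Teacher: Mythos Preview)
Your proposal is correct and is essentially the same approach as the paper's. The paper writes down the map on the Clifford basis as $c_1^{\epsilon_1}\cdots c_n^{\epsilon_n} e_{(n)} \mapsto (-1)^{\sum i\epsilon_i} c_1^{\epsilon_1+1}\cdots c_n^{\epsilon_n+1} e_{(1^n)}$ and declares the check easy; this is, up to harmless overall signs, precisely right multiplication by $c_1\cdots c_n$, and your intertwining identity $e_{(n)}(c_1\cdots c_n) = (c_1\cdots c_n)e_{(1^n)}$ (from $s_k(c_1\cdots c_n) = -(c_1\cdots c_n)s_k$) together with invertibility of $c_1\cdots c_n$ is exactly the ``easy check'' the paper omits.
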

\begin{proof}
Define $f: \Se_n e_{(n)} \rightarrow \Se_n e_{(1^n)} \{n\}$ by
$$f(c_1^{\epsilon_1} \cdots c_n^{\epsilon_n} e_{(n)}) = (-1)^{\epsilon_1+2\epsilon_2+ \cdots + n\epsilon_n} c_1^{\epsilon_1+1} \cdots c_n^{\epsilon_n+1} e_{(1^n)}$$
where $\epsilon_i \in \{ 0,1 \}$. It is easy to check that this is an isomorphism of $\Se_n$-modules.
\end{proof}

The definition of $\tH$, just like that of $\H$ from \cite{Kh1}, involves taking the idempotent closure. This means that the idempotent $e_{(n)}$ gives us objects $\P^{(n)}$ and $\Q^{(n)}$. Note that by Lemma \ref{lem:idempotent} we get isomorphic objects if we consider $\P^{(1^n)}$ and $\Q^{(1^n)}$.  The objects $ \P^{(n)} $ and $ \Q^{(n)} $ are depicted diagrammatically by rectangles labeled by $\lambda$ just as in the case of the category $\H$.

\begin{lemma}\label{lem:PP}
If $m \le n$ then as $\Se_{m+n}$-modules we have an isomorphism
$$\Se_{m+n} \otimes_{\Se_m \times \Se_n}  (\Se_m e_{(m)} \times \Se_n e_{(n)}) \cong \bigoplus_{k=0}^m (\Se_{m+n} e_{(n+k,m-k)} \{0,1\})^{\oplus d_k}$$
where if $m < n$ then $d_k=1$ if $k=0,m$ and $d_k=2$ otherwise while if $m=n$ then $d_k=1$ if $k=m$, $d_k=0$ if $k=0$, and $d_k=2$ otherwise. 
\end{lemma}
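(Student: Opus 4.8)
The plan is to prove Lemma \ref{lem:PP} by a two-step reduction: first replace the quasi-idempotents $e_{(n)}$ and $e_{(m)}$ with the honest idempotents $\frac{1}{m!}\sum_{w}w$ and $\frac{1}{n!}\sum_{w}w$, so that the left-hand side becomes (up to scalar) the induction to $\Se_{m+n}$ of the trivial $S_m\times S_n$-representation tensored up along the inclusion $\k[S_m\times S_n]\hookrightarrow \Se_m\times\Se_n$. Concretely, since $\Se_n = \Cliff_n\rtimes\k[S_n]$ as a vector space, we have $\Se_n e_{(n)} \cong \Cliff_n\otimes_\k \k[S_n]e_{(n)} \cong \Cliff_n$ as a $\Cliff_n$-module (with the $S_n$-action permuting the $c_i$ and acting trivially on the $\k[S_n]e_{(n)}$ factor). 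Thus the left-hand side is $\Se_{m+n}\otimes_{\Se_m\times\Se_n}(\Cliff_m\boxtimes\Cliff_n)$, and using $\Cliff_m\otimes\Cliff_n\cong\Cliff_{m+n}$ together with the Mackey-style identity $\Se_{m+n}\otimes_{\Se_m\times\Se_n}\Cliff_{m+n}\cong \Cliff_{m+n}\otimes_{\k}(\k[S_{m+n}]\otimes_{\k[S_m\times S_n]}\k)$, the whole computation collapses to understanding the $S_{m+n}$-module $\Ind_{S_m\times S_n}^{S_{m+n}}(\triv\boxtimes\triv)$ together with its $\Cliff_{m+n}$-decoration.

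The second step is the classical Pieri rule: $\Ind_{S_m\times S_n}^{S_{m+n}}(\triv_m\boxtimes\triv_n)\cong\bigoplus_{k=0}^{\min(m,n)} S^{(n+k,m-k)}$ when $m<n$ (and with the obvious modification when $m=n$, where the partition $(n+k,m-k)=(n,n)$ at $k=m$ appears once, while $k=0$ would give $(n,n)$ again, hence the combinatorial adjustment to the multiplicities $d_k$). Translating back through the equivalence between $\Se_N$-supermodules and projective $S_N$-representations, each irreducible $S_N$-module $S^\lambda$ with $\lambda$ a two-row partition corresponds, after tensoring with the basic spin module $\Cliff_N$, to a sum of copies of $\Se_N e_\mu$ where $\mu$ is the strict partition associated to $\lambda$ — and here $(n+k,m-k)$ is already strict precisely when $n+k>m-k$, i.e. always except that when $m=n,k=0$ the "partition" $(n,n)$ is not strict, which is exactly why $d_0=0$ in that case. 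The grading shift $\{0,1\}$ records the $\Z/2\Z$-grading ambiguity: a type $\mathtt{Q}$ (non-split) simple supermodule $\Se_N e_\mu$ with $\ell(\mu)$ of the appropriate parity contributes a module together with its parity shift, which is where the $\{0,1\}$ comes from, while the type $\mathtt{M}$ case contributes a single summand — and for two-row strict $\mu$ the type is controlled by the parity of $m-k$ (equivalently $N-\ell$), matching the stated $d_k$.

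I would organize the write-up as: (i) a lemma identifying $\Se_n e_{(n)}$ with $\Cliff_n$ as above; (ii) the Mackey/induction identity reducing the tensor product over $\Se_m\times\Se_n$ to an induction of the trivial $S$-representation with a $\Cliff_{m+n}$ twist, using $\Cliff_m\otimes\Cliff_n\cong\Cliff_{m+n}$; (iii) the Pieri rule; and (iv) the dictionary (Sergeev \cite{Ser}, Nazarov \cite{N}) between $\Se_N$-supermodules and strict partitions, which converts the multiplicity-one summands $S^{(n+k,m-k)}$ into $\Se_{m+n}e_{(n+k,m-k)}\{0,1\}^{\oplus d_k}$ with the parity bookkeeping.

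The main obstacle I expect is the grading/parity bookkeeping in step (iv): keeping careful track of when $\Se_{m+n}e_{(n+k,m-k)}$ is a type $\mathtt{M}$ versus type $\mathtt{Q}$ simple supermodule, and consequently whether the Pieri multiplicity-one summand yields one copy or a copy-plus-parity-shift $\{0,1\}$, is exactly what produces the asymmetric values $d_k=1$ for $k=0,m$ versus $d_k=2$ otherwise (and the degeneration at $m=n$). This requires pinning down the precise form of Sergeev's quasi-idempotents $e_\lambda$ and the superdimension/endomorphism-ring of $\Se_{m+n}e_\lambda$ for two-row strict $\lambda$; everything else (the Mackey identity, the Clifford tensor identity, the Pieri rule) is standard.
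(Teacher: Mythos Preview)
Your reduction in steps (i)--(iii) is correct and clean: the left-hand side really is
$\Se_{m+n}\otimes_{\k[S_{m+n}]}\Ind_{S_m\times S_n}^{S_{m+n}}(\triv)\cong \Cliff_{m+n}\otimes\Ind_{S_m\times S_n}^{S_{m+n}}(\triv)$,
and the ordinary Pieri rule then gives $\bigoplus_{k}\Cliff_{m+n}\otimes S^{(n+k,m-k)}$. The gap is entirely in step (iv).

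The ``dictionary'' you invoke does not exist in the form you need: the functor $\Cliff_N\otimes(-)$ does \emph{not} send the Specht module $S^{(a,b)}$ to a sum of copies of $\Se_N e_{(a,b)}$ alone. By Frobenius reciprocity, the multiplicity of the spin simple $D^\mu$ in $\Cliff_N\otimes S^\lambda$ equals the multiplicity of $S^\lambda$ in the restriction $D^\mu|_{S_N}$, and the latter is typically spread over many $\lambda$. Concretely, for $N=3$ one computes $\Cliff_3|_{S_3}\cong 2\,\triv\oplus 2\,S^{(2,1)}\oplus 2\,\text{sign}$, so $\Cliff_3\otimes S^{(2,1)}$ already contains $D^{(3)}=\Se_3 e_{(3)}$ as a summand, not only $D^{(2,1)}$. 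Thus the Pieri summands do not line up term-by-term with the spin summands on the right-hand side of the Lemma; the multiplicities $d_k\in\{1,2\}$ are \emph{not} visible from the ordinary Pieri rule (which gives each $S^{(n+k,m-k)}$ once) and must instead come from genuinely spin combinatorics.

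Your account of the grading shift is also off: in the Lemma \emph{every} summand carries $\{0,1\}$, whereas a type-$\mathtt{M}$/type-$\mathtt{Q}$ distinction would make $\{0,1\}$ appear only for some $k$. The paper's argument is quite different and avoids all of this: the multiplicities $d_k$ are read off from the product formula for one-row Schur $Q$-functions $q_m q_n$ (which \emph{is} the character theory adapted to $\Se_N$), and the uniform $\{0,1\}$ is obtained by exhibiting an explicit odd automorphism of the induced module, namely right multiplication by $c_1+\cdots+c_m$ on the $\Se_m e_{(m)}$ factor, which forces every simple to occur together with its parity shift. If you want to salvage your route, you would need an independent computation of $\Cliff_N\otimes S^{(a,b)}$ as an $\Se_N$-module for all two-row $(a,b)$; but that computation is essentially equivalent to the Schur $Q$-function formula the paper cites, so the detour through ordinary Pieri buys nothing.
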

\begin{proof}
Ignoring the grading shifts, the decomposition follows from multiplication of Schur Q-functions for two single row partitions.  For more details on these formulas see for example ~\cite[section 3.4]{WW}.

There is a degree zero isomorphism $ f \colon \Se_m e_{(m)} \rightarrow \Se_m e_{(m)} \{1 \}$ where
$$ c_1^{\epsilon_1} \cdots c_m^{\epsilon_m} e_{(m)} \mapsto c_1^{\epsilon_1} \cdots c_m^{\epsilon_m} (c_1+\cdots+c_m) e_{(m)}. $$ 
This gives rise to a degree zero isomorphism
$$ F \colon \Se_{m+n} \otimes_{\Se_m \times \Se_n}  (\Se_m e_{(m)} \times \Se_n e_{(n)}) \rightarrow \Se_{m+n} \otimes_{\Se_m \times \Se_n}  (\Se_m e_{(m)} \times \Se_n e_{(n)}) \{1\}$$
where $ w \otimes (v_1 \otimes v_2) \mapsto w \otimes (f(v_1) \otimes v_2) $.
Since the induced module is evenly isomorphic to its shift, every appearance of a simple module in its decomposition must come paired with its shifted module.
\end{proof}

\begin{theorem}\label{thm:2}
Inside $\tH$ we have the following relations
\begin{enumerate}
\item $\P^{(\l)}$ and $\P^{(\mu)}$ commute for any strict partitions $\l,\mu$,
\item $\Q^{(\l)}$ and $\Q^{(\mu)}$ commute for any strict partitions $\l,\mu$,
\item 
If $m < n$ then
$$\P^{(m)} \P^{(n)} \cong \P^{(m+n)} \{0,1 \} \oplus \P^{(n,m)} \{0,1\} \oplus \left[ \P^{(m+n-1,1)} \{0,1\} \oplus \dots \oplus \P^{(n+1,m-1)} \{0,1\} \right]^{\oplus 2}$$
while if $m=n$ then 
$$\P^{(m)} \P^{(n)} \cong \P^{(m+n)} \{0,1 \} \oplus \left[ \P^{(m+n-1,1)} \{0,1\} \oplus \dots \oplus \P^{(n+1,m-1)} \{0,1\} \right]^{\oplus 2}.$$
Similar relations hold for $\Q^{(m)} \Q^{(n)}$.
\item $\Q^{(n)} \P^{(m)} \cong \P^{(m)} \Q^{(n)} \bigoplus_{k \ge 1} \left( \P^{(m-k)} \Q^{(n-k)} \oplus \P^{(m-k)} \Q^{(n-k)} \{ 1 \} \right)$.
\end{enumerate}
\end{theorem}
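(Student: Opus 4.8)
The plan is to reduce each of the four relations to the representation theory of the Sergeev algebras together with the local relations of $\tH$, following closely the strategy Khovanov used for $\H$ in \cite[Prop.~1]{Kh1} (and its refinement in \cite[Prop.~2]{CL1}). For (1) and (2), consider the maximal-crossing morphism $\beta\colon\P^a\P^b\to\P^b\P^a$ obtained by sliding the first $a$ strands in front of the last $b$. By the braid relation (\ref{eq:rel1}) it is an isomorphism, and by (\ref{eq:rel1}) together with (\ref{eq:t1}) (dots pass through crossings) it intertwines the $\Se^{opp}_a\times\Se^{opp}_b$-action on $\P^a\P^b$ with the $\Se^{opp}_b\times\Se^{opp}_a$-action on $\P^b\P^a$; here one uses that Sergeev's quasi-idempotent $e_\l$ lies in the subalgebra of $\Se_{|\l|}$ generated by the symmetric-group elements and the Clifford generators. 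Hence $\beta$ carries the image of $e_\l\otimes e_\mu$ isomorphically onto the image of $e_\mu\otimes e_\l$, giving $\P^{(\l)}\P^{(\mu)}\cong\P^{(\mu)}\P^{(\l)}$; relation (2) is identical with $\Se_a\times\Se_b$ replacing $\Se^{opp}_a\times\Se^{opp}_b$.

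Relation (3) is purely a statement about products of $\P$'s. Once one identifies $\End_{\tH}(\P^N)$ with $\Se_N^{opp}$ and uses idempotent completeness of $\tH$, the object $\P^{(m)}\P^{(n)}$ corresponds to the induced $\Se_{m+n}$-module appearing on the left-hand side of Lemma \ref{lem:PP}, and $\P^{(\nu)}$ corresponds to $\Se_{m+n}e_\nu$. Thus (3) is a direct translation of Lemma \ref{lem:PP}: the strict two-row partitions $(n+k,m-k)$ for $0\le k\le m$ (with $(n,m)$ omitted when $m=n$, since it is then not strict), the single grading shift $\{0,1\}$ for $k=0,m$, and the doubled $\{0,1\}$ for the intermediate $k$ --- the doubling coming from the shifted self-isomorphism $F$ in the proof of Lemma \ref{lem:PP} --- are exactly those in the statement.

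Relation (4) is the main point. The first step is the base case $\Q\P\cong\P\Q\oplus\1\oplus\1\{1\}$: the modified curl relation (\ref{eq:rel2B}) exhibits $\id_{\Q\P}$, after subtracting the cap-cup and the dotted cap-cup terms, as a morphism factoring through $\P\Q$ via crossings, while relations (\ref{eq:t2})--(\ref{eq:t4}) show that the cap/cup split off a copy of $\1$ (the circle equals $1$), that the dotted cap/cup split off a copy of $\1\{1\}$ (a circle carrying two dots is $\pm\id_\1$), and that the mixed compositions vanish (a circle with one dot is $0$), so that these are orthogonal split idempotents summing to the identity. Iterating this to commute all $\Q$'s past all $\P$'s in $\Q^n\P^m$, then applying the quasi-idempotents $e_{(n)}$ and $e_{(m)}$ and simplifying via Lemmas \ref{lem:idempotent} and \ref{lem:PP}, one writes down explicit morphisms $\P^{(m-k)}\Q^{(n-k)}\{0\}\rightleftarrows\Q^{(n)}\P^{(m)}$ and $\P^{(m-k)}\Q^{(n-k)}\{1\}\rightleftarrows\Q^{(n)}\P^{(m)}$ built from $k$ cups/caps, with and without an extra dot, pre- and post-composed with the idempotents. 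One then checks these are matrix-unit type --- one composition order is a nonzero scalar multiple of the identity and the other is zero --- and finally that the associated idempotents are complete in $\End_{\tH}(\Q^{(n)}\P^{(m)})$; this last step is a dimension count, matching $\dim\End_{\tH}(\Q^{(n)}\P^{(m)})$ against the sum of the $\dim\Hom$'s among the summands, both computed through Sergeev-algebra branching. The extra shift $\{1\}$ on each $k\ge1$ summand categorifies the coefficient $2$ in Proposition \ref{prop:3}(3).

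The hard part will be the completeness step in relation (4), which rests on a faithful diagrammatic model for the morphism spaces of $\tH$ --- the twisted analogue of Khovanov's basis theorem, asserting $\End_{\tH}(\P^n)\cong\Se_n^{opp}$, $\End_{\tH}(\Q^n)\cong\Se_n$, and the expected dimension of $\Hom_{\tH}(\Q^a\P^b,\P^c\Q^d)$. Establishing this requires either constructing an action of $\tH$ on $\bigoplus_n\Se_n\mod$ and proving it faithful on $2$-morphisms, or adapting Khovanov's graphical/bimodule argument to the $\Z/2\Z$-graded setting; in either approach the sign bookkeeping forced by (\ref{eq:t5}) (distant dots supercommute) and by the grading shifts is the most delicate ingredient. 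Granting this input, the remaining verifications are routine adaptations of the untwisted computations.
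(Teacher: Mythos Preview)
Your proposal is correct and follows essentially the same route as the paper for (1)--(3): the crossing isomorphism for commutativity, and Lemma~\ref{lem:PP} for the product decomposition.

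For (4) the paper is more direct than your sketch. Rather than starting from the base case $\Q\P\cong\P\Q\oplus\1\oplus\1\{1\}$ and iterating, it immediately writes down the explicit maps $A_{k,c}$ and $B_{k,c}$ (exactly your ``$k$ cups/caps, with and without an extra dot'') and asserts that, after rescaling, they are mutually inverse, referring to the untwisted computation in \cite[Prop.~2]{CL1}. The one point it singles out as genuinely new is the observation (equation~(\ref{eq:zero})) that two hollow dots sandwiched between symmetrizers vanish, which is why only $c\in\{0,1\}$ occur; you have implicitly assumed this by restricting to ``with and without an extra dot'' but do not justify it. Conversely, the paper does \emph{not} carry out the completeness/dimension-count step you flag as the hard part, nor does it invoke a basis theorem for $\tH$: it simply claims the maps compose to the identity on both sides as a diagrammatic identity. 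So your worry is legitimate, but the paper's argument---like Khovanov's in the untwisted case---proceeds by direct verification of $\sum B_{k,c}\gamma^{k,c}_{m,n}A_{k,c}=\id$ and $A_{k,c}B_{k',c'}=\delta_{k,k'}\delta_{c,c'}\cdot(\text{scalar})$ rather than by counting dimensions, and hence does not rely on knowing $\End_{\tH}(\P^n)\cong\Se_n^{opp}$.
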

\begin{proof}
The first two relations are obvious. The third relation follows from Lemma \ref{lem:PP}. To prove the last relation define maps 
$$A_{k,c}: \Q^{(n)} \P^{(m)} \rightarrow \P^{(m-k)} \Q^{(n-k)}  \{ c \} \ \ \text{ and } \ \ B_{k,c}: \P^{(m-k)} \Q^{(n-k)} \rightarrow \Q^{(n)} \P^{(m)} \{ c \}$$
(for $c=0,1$) as follows: 
\begin{equation*}
\begin{tikzpicture}[>=stealth]
\draw (4.5,1.5) node {$A_{k,0} =$};
\draw (5,0) rectangle (6,.5);
\draw (5.5,.25) node {$(n)$};
\draw (4.75,2.5) rectangle (6.25,3);
\draw (5.5,2.75) node {${(m-k)}$};
\draw (5.5,.5) -- (7.5,2.5) [<-][thick];
\draw (7.5,.5) -- (5.5,2.5) [->][thick];
\draw (7,0) rectangle (8,.5);
\draw (7.5,.25) node {${(m)}$};
\draw (6.75,2.5) rectangle (8.25,3);
\draw (7.5,2.75) node {${(n-k)}$};
\draw (6,.5) arc (180:0:0.5cm)[<-] [thick];
\draw (6.8,.65) node{$k$};

\draw (9.5,1.5) node {$A_{k,1} =$};
\draw (10,0) rectangle (11,.5);
\draw (10.5,.25) node {$(n)$};
\draw (9.75,2.5) rectangle (11.25,3);
\draw (10.5,2.75) node {${(m-k)}$};
\draw (10.5,.5) -- (12.5,2.5) [<-][thick];
\draw (12.5,.5) -- (10.5,2.5) [->][thick];
\draw (12,0) rectangle (13,.5);
\draw (12.5,.25) node {${(m)}$};
\draw (11.75,2.5) rectangle (13.25,3);
\draw (12.5,2.75) node {${(n-k)}$};
\draw (11,.5) arc (180:0:0.5cm)[<-] [thick];
\draw (11.8,.65) node{$k$};
\draw [black] (11.9,.8) circle (2pt);
\draw (11.5,.75) node{$1$};
\end{tikzpicture}
\end{equation*}
\begin{equation*}
\begin{tikzpicture}[>=stealth]
\draw (4.5,1.5) node {$B_{k,0} =$};
\draw (4.75,0) rectangle (6.25,.5);
\draw (5.5,2.75) node {$(n)$};
\draw (5,2.5) rectangle (6,3);
\draw (5.5,.25) node {${(m-k)}$};
\draw (5.5,.5) -- (7.5,2.5) [->][thick];
\draw (7.5,.5) -- (5.5,2.5) [<-][thick];
\draw (6.75,0) rectangle (8.25,.5);
\draw (7.5,2.75) node {${(m)}$};
\draw (7,2.5) rectangle (8,3);
\draw (7.5,.25) node {${(n-k)}$};
\draw (6,2.5) arc (-180:0:0.5cm)[->] [thick];
\draw (6.5,2.3) node{$\scriptstyle{k}$};

\draw (9.5,1.5) node {$B_{k,1} =$};
\draw (9.75,0) rectangle (11.25,.5);
\draw (10.5,2.75) node {$(n)$};
\draw (10,2.5) rectangle (11,3);
\draw (10.5,.25) node {${(m-k)}$};
\draw (10.5,.5) -- (12.5,2.5) [->][thick];
\draw (12.5,.5) -- (10.5,2.5) [<-][thick];
\draw (11.75,0) rectangle (13.25,.5);
\draw (12.5,2.75) node {${(m)}$};
\draw (12,2.5) rectangle (13,3);
\draw (12.5,.25) node {${(n-k)}$};
\draw (11,2.5) arc (-180:0:0.5cm)[->] [thick];
\draw (11.5,2.3) node{$\scriptstyle{k}$};
\draw [black] (11.5,2) circle (2pt);
\draw (11.5,1.8) node{$\scriptstyle{1}$};
\end{tikzpicture}
\end{equation*}
where the label $k$ means there are $k$ strands and the label $1$ for the hollow dot means put a single hollow dot on the upper most (resp. lowermost) arc.

We claim that $ \gamma_{m,n}^{k,c} A_{k,c}$ and $B_{k,c}$ for carefully chosen non-zero constants $ \gamma_{m,n}^{k,c} $ give rise to inverse maps. 
The proof of this is very similar to that of the third relation in Theorem \ref{thm:1}. One key point to note is that two hollow dots sandwiched between $\P^{(n)}$ (for $n \ge 2$) is equal to zero. The reason for this is illustrated diagrammatically below (when $n=2$). It uses the fact that dots supercommute. 
\begin{equation}\label{eq:zero}
\begin{tikzpicture}[>=stealth]
\draw (-.25,1) rectangle (1.75,1.5);
\draw (.75,1.25) node {$(2)$};

\draw [shift = {+(0,2)}](-.25,1) rectangle (1.75,1.5);
\draw [shift = {+(0,2)}](.75,1.25) node {$(2)$};

\draw (.5,1.5) -- (.5,3) [->][thick];
\draw (1,1.5) -- (1,3) [->][thick];
\draw [black] (.5,2.0) circle (3pt);
\draw [black] (1,2.5) circle (3pt);
\draw (2,2.25) node {$= $};

\draw [shift = {+(3,0)}](-.25,1) rectangle (1.75,1.5);
\draw [shift = {+(3,0)}](.75,1.25) node {$(2)$};

\draw [shift = {+(3,0)}][shift = {+(0,2)}](-.25,1) rectangle (1.75,1.5);
\draw [shift = {+(3,0)}][shift = {+(0,2)}](.75,1.25) node {$(2)$};

\draw [shift = {+(3,0)}](.5,1.5) -- (1,2) [thick];
\draw [shift = {+(3,0)}](1,1.5) -- (.5,2) [thick];

\draw [shift = {+(3,0)}](.5,2) -- (.5,3) [->][thick];
\draw [shift = {+(3,0)}](1,2) -- (1,3) [->][thick];
\draw [shift = {+(3,0)}][black] (.5,2.25) circle (3pt);
\draw [shift = {+(3,0)}][black] (1,2.5) circle (3pt);
\draw [shift = {+(3,0)}](2,2.25) node {$= \ \ -$};

\draw [shift = {+(6,0)}](-.25,1) rectangle (1.75,1.5);
\draw [shift = {+(6,0)}](.75,1.25) node {$(2)$};
\draw [shift = {+(6,0)}][shift = {+(0,2)}](-.25,1) rectangle (1.75,1.5);
\draw [shift = {+(6,0)}][shift = {+(0,2)}](.75,1.25) node {$(2)$};
\draw [shift = {+(6,0)}](.5,2.5) -- (1,3) [->][thick];\draw [shift = {+(6,0)}](1,2.5) -- (.5,3) [->][thick];

\draw [shift = {+(6,0)}](.5,1.5) -- (.5,2.5) [thick];
\draw [shift = {+(6,0)}](1,1.5) -- (1,2.5) [thick];
\draw [shift = {+(6,0)}][black] (.5,2) circle (3pt);
\draw [shift = {+(6,0)}][black] (1,2.25) circle (3pt);
\draw [shift = {+(6,0)}](2,2.25) node {$= \ \ -$};
\draw  [shift = {+(9,0)}](-.25,1) rectangle (1.75,1.5);
\draw [shift = {+(9,0)}] (.75,1.25) node {$(2)$};

\draw  [shift = {+(9,0)}][shift = {+(0,2)}](-.25,1) rectangle (1.75,1.5);\draw  [shift = {+(9,0)}][shift = {+(0,2)}](.75,1.25) node {$(2)$};
\draw  [shift = {+(9,0)}](.5,1.5) -- (.5,3) [->][thick];
\draw  [shift = {+(9,0)}](1,1.5) -- (1,3) [->][thick];
\draw  [shift = {+(9,0)}][black] (.5,2.0) circle (3pt);
\draw  [shift = {+(9,0)}][black] (1,2.5) circle (3pt);

\end{tikzpicture}
\end{equation}
This explains why in the definitions of maps $A_{k,c}$ and $B_{k,c}$ above one puts at most one hollow dot on the caps and cups ({\it{i.e.}} $c=0,1$). 
\end{proof}

\subsection{A categorification of Fock space}\label{sec:fock}

We now explain how to lift the action of $\th$ on the Fock space. This is completely analogous to the untwisted case so we only sketch the construction. In that case the categorified Fock space is $\V_{Fock} = \oplus_{n \ge 0} S_n\mod$. In our case we take $\V_{Fock} = \oplus_{n \ge 0} \Se_n\mod$ consisting of $\Z/2\Z$-graded finite dimensional left $\Se_n$-modules. 

Since $\Se_n$ is naturally a subalgebra of $\Se_{n+1}$ we can give $\Se_{n+1}$ the structure of a $(\Se_{n+1},\Se_n)$-bimodule as well as a $(\Se_n,\Se_{n+1})$-bimodule. This defines two functors 
\begin{align*}
\P(n): \Se_n\mod \rightarrow \Se_{n+1}\mod \ \ \ & M \mapsto \Se_{n+1} \otimes_{\Se_n} M \\
\Q(n): \Se_{n+1}\mod \rightarrow \Se_n\mod \ \ \ & N \mapsto \Se_{n+1} \otimes_{\Se_{n+1}} N.
\end{align*}

It remains to define the natural transformations between these functors. 
The functors and natural transformations which follow were first considered in ~\cite{BK}.
First, the hollow dots on an upward and downward strand are defined by
\begin{align*}
\P(n) \rightarrow \P(n+1) \{1\} \ \ \ & x \mapsto (-1)^{|x|} xc_{n+1} \\
\Q(n) \rightarrow \Q(n+1) \{1\} \ \ \ & x \mapsto c_{n+1}x
\end{align*}
The reason for the extra $(-1)^{|x|}$ factor in the first expressions above is that an odd morphism of super-bimodules must be right linear but left antilinear. 

The composition $\P(n+1) \circ \P(n)$ is induced by $\Se_{n+2}$ viewed as a $(\Se_{n+2},\Se_n)$-bimodule. The crossing morphism is given by the map $\Se_{n+2} \rightarrow \Se_{n+2}$, $x \mapsto xs_{n+1}$. Likewise, $\Q(n) \circ \Q(n+1)$ is also induced by $\Se_{n+2}$ with the downward crossing given by $x \mapsto s_{n+1}x$. 

A right-pointing crossing is a map $\P(n-1) \circ \Q(n-1) \rightarrow \Q(n) \circ \P(n)$. Now $\P(n-1) \circ \Q(n-1)$ corresponds to the bimodule $\Se_n \otimes_{\Se_{n-1}} \Se_n$ while $\Q(n) \circ \P(n)$ to the bimodule $\Se_{n+1}$. The corresponding map is then 
$$\Se_n \otimes_{\Se_{n-1}} \Se_n \rightarrow \Se_{n+1} \ \ \ x \otimes y \mapsto x s_n y.$$
Next, an element of $\Se_{n+1}$ either contains no $s_n$ or can be written as $x s_n y$ where $x,y$ contain no $s_n$. The left-pointing crossing is a map $\Q(n) \circ \P(n) \rightarrow \P(n-1) \circ \Q(n-1)$ given by a map 
$$\Se_{n+1} \rightarrow \Se_n \otimes_{\Se_{n-1}} \Se_n \ \ \ x \mapsto 0, \ \ x s_n y \mapsto x \otimes y.$$
Finally, the four adjunction maps are given as follows
\begin{align*}
\Q(n) \circ \P(n) \rightarrow \id: \ \ & \Se_{n+1} \rightarrow \Se_n, \text{ natural projection mapping } s_n, c_{n+1} \mapsto 0 \\
\id \rightarrow \Q(n) \circ \P(n): \ \ & \Se_n \rightarrow \Se_{n+1}, \text{ natural inclusion } \\
\P(n) \circ \Q(n) \rightarrow \id: \ \ & \Se_{n+1} \otimes_{\Se_n} \Se_{n+1} \rightarrow \Se_{n+1}, \text{ multiplication map } x \otimes y \mapsto xy \\
\id \rightarrow \P(n) \circ \Q(n): \ \ & \Se_{n+1} \rightarrow \Se_{n+1} \otimes_{\Se_n} \Se_{n+1}, \\ 
& 1 \mapsto \sum_{i=1}^{n+1} (s_i \cdots s_n \otimes s_n \cdots s_i - s_i \cdots s_n c_{n+1} \otimes c_{n+1} s_n \cdots s_i).
\end{align*}

What remains is proving all the relations encoded in the diagrams from section \ref{sec:tH} together with the isotopy relations. The strand isotopy relations amount to proving the following relations

\begin{equation}
\label{isotopy1}
\begin{tikzpicture}
\draw[thick] (0,0) to (0,1) arc(180:0:.5) arc(180:360:.5) to (2,2)[->];
\draw (2.5,1) node {$=$};
\draw [thick] (3,0) -- (3,2)[->];
\draw (3.5,1) node {$=$};
\draw[thick] (4,2)[<-] to (4,1) arc(180:360:.5) arc(180:0:.5) to (6,0);
  
\draw[thick] (8,0) to (8,1)[<-] arc(180:0:.5) arc(180:360:.5) to (10,2);
\draw (10.5,1) node {$=$};
\draw [thick] (11,0) -- (11,2)[<-];
\draw (11.5,1) node {$=$};
\draw[thick] (12,2) to (12,1) arc(180:360:.5) arc(180:0:.5) to (14,0)[->];
\end{tikzpicture}
\end{equation}

\begin{equation}
\label{isotopy4}
\begin{tikzpicture}
   \draw[->,thick] (0.5,1) .. controls (0.5,0) and (1.5,0) .. (1.5,1);
  \draw[<-,thick] (1,1) to (0.5,0);
  \draw (2.25,0.5) node {=};
  \draw[->,thick] (3,1) .. controls (3,0) and (4,0) .. (4,1);
  \draw[<-,thick] (3.5,1) to (4,0);
   \draw[->,thick] (6.5,1) .. controls (6.5,0) and (7.5,0) .. (7.5,1);
  \draw[->,thick] (7,1) to (6.5,0);
  \draw (8.25,0.5) node {=};
  \draw[->,thick] (9,1) .. controls (9,0) and (10,0) .. (10,1);
  \draw[->,thick] (9.5,1) to (10,0);
\end{tikzpicture}
\end{equation}

\begin{equation}
\label{isotopy5}
\begin{tikzpicture}
  \draw[<-,thick] (0.5,0) .. controls (0.5,1) and (1.5,1) .. (1.5,0);
  \draw[<-,thick] (1.5,1) to (1,0);
  \draw (2.25,0.5) node {=};
  \draw[<-,thick] (3,0) .. controls (3,1) and (4,1) .. (4,0);
  \draw[<-,thick] (3,1) to (3.5,0);
  \draw[<-,thick] (6.5,0) .. controls (6.5,1) and (7.5,1) .. (7.5,0);
  \draw[->,thick] (7.5,1) to (7,0);
  \draw (8.25,0.5) node {=};
  \draw[<-,thick] (9,0) .. controls (9,1) and (10,1) .. (10,0);
  \draw[->,thick] (9,1) to (9.5,0);
\end{tikzpicture}
 \end{equation}

These relations, along with those from section \ref{sec:tH}, can be checked by direct calculations (akin to those calculations from \cite{Kh1} and \cite{CL1}). We include one instance of such a computation, namely the left hand relation from (\ref{eq:t2}). Both sides of that equality correspond to a composition 
$$\Q(n) \P(n) \rightarrow \Q(n) \P(n) \{1\} \rightarrow \id \{1\}$$
which is a sequence of maps 
$$\Se_{n+1} \otimes_{\Se_{n+1}} \Se_{n+1} \rightarrow \Se_{n+1} \otimes_{\Se_{n+1}} \Se_{n+1} \{1\} \rightarrow \Se_n \{1\}.$$
Consider an element $x \otimes y \in \Se_{n+1} \otimes_{\Se_{n+1}} \Se_{n+1}$. If a reduced expression of $xy$ contains $s_n$ or if does not contain $c_{n+1}$ then both sides of the left equality in (\ref{eq:t2}) map $x \otimes y$ to zero. On the other hand, if $xy$ does not contain $s_n$ but does contain $c_{n+1}$ then the left side of the left equality in (\ref{eq:t2}) acts by
$$x \otimes y \mapsto c_{n+1}x \otimes y \mapsto c_{n+1}xy$$
while the right side acts by 
$$x \otimes y \mapsto (-1)^{|x|+|y|} x \otimes yc_{n+1} \mapsto (-1)^{|x|+|y|} xyc_{n+1} = - c_{n+1} xy$$
where the last equality follows since $xy$ does not contain $s_n$ but does contain one copy of $c_{n+1}$. The result follows. 

\begin{corollary}
There exists an injective map $\th \rightarrow K_0(\tH)$. 
\end{corollary}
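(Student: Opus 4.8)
The plan is to imitate the argument (indicated just after Theorem \ref{thm:1}) that $\h$ embeds into $K_0(\H)$: produce the obvious candidate map and then deduce injectivity from faithfulness of the bosonic Fock representation of $\th$. First I would exhibit the map. Regarding $K_0(\tH)$ as an ordinary (ungraded) Grothendieck ring, i.e. identifying the shift $\{1\}$ with the identity (equivalently, specializing the $\Z[q]/(q^2-1)$-action at $q=1$), I send $p^{(m)}\mapsto[\P^{(m)}]$ and $q^{(m)}\mapsto[\Q^{(m)}]$. To check this respects the presentation of Proposition \ref{prop:3}: the commutation relations (1) are immediate from parts (1)--(2) of Theorem \ref{thm:2}; relation (3) is part (4) of Theorem \ref{thm:2} read in $K_0$, the two summands $\P^{(m-k)}\Q^{(n-k)}\oplus\P^{(m-k)}\Q^{(n-k)}\{1\}$ contributing $1+q$, which is $2$ at $q=1$; and relation (2) is the Schur $Q$-function identity $\sum_j(-1)^j[\P^{(j)}][\P^{(2n-j)}]=0$, which I would verify by expanding each product via part (3) of Theorem \ref{thm:2} and observing that in the alternating sum all two-row classes $[\P^{(\l)}]$ cancel. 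That last computation is the decomposition-level analogue of the generating-function manipulation $\bigl(\sum_m p^{(m)}t^m\bigr)\bigl(\sum_m(-1)^m p^{(m)}t^m\bigr)=1$ used to prove Proposition \ref{prop:3}.

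For injectivity I would use the categorical Fock space $\V_{Fock}=\bigoplus_{n\ge0}\Se_n\mod$ constructed in Section \ref{sec:fock}. The functors $\P(n),\Q(n)$ and the natural transformations between them descend to operators on $K_0(\V_{Fock})=\bigoplus_{n\ge0}K_0(\Se_n\mod)$, factoring the map as $\th\to K_0(\tH)\to\End\bigl(K_0(\V_{Fock})\bigr)$. Thus it suffices to show the composite is injective, and after base change to $\mathbb{Q}$ I would identify it with the bosonic Fock representation of $\th$. Concretely, $K_0(\V_{Fock})_{\mathbb{Q}}$ is the algebra $\Gamma_{\mathbb{Q}}$ of symmetric functions in the odd power sums — the super representation theory of the Sergeev algebras categorifies $\Gamma$, the twisted analogue of Khovanov's identification of $\bigoplus_n\k[S_n]\mod$ with the symmetric functions — and under this isomorphism $[\Se_m e_{(m)}]$ corresponds, up to a nonzero scalar, to the one-row Schur $Q$-function of degree $m$, so $[\P^{(m)}]$ acts by multiplication by it and $[\Q^{(m)}]$ by its adjoint differential operator. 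Comparing the exponential formulas (\ref{eq:def1}) with the generating function $\prod_i(1+x_it)/(1-x_it)$ for Schur $Q$-functions, these are, after a uniform rescaling of the identification, exactly the operators representing $p^{(m)}$ and $q^{(m)}$ in the (faithful, irreducible) Fock representation of $\th$. Hence the composite, and therefore $\th\to K_0(\tH)$, is injective.

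The main obstacle is precisely this identification: matching induction and restriction on $\bigoplus_n\Se_n\mod$ with the creation and annihilation operators of $\th$ acting on $\Gamma$, pinning down the normalizing scalars, and coping with the $\Z/2\Z$-grading together with the $2$-power discrepancies in the integral lattice $\bigoplus_n K_0(\Se_n\mod)$ — which is the reason I pass to $\mathbb{Q}$-coefficients. This dictionary is classical in spirit (Schur, Sergeev, J\'ozefiak), but it is the one place where genuine work is needed; the remaining steps are routine bookkeeping with Theorem \ref{thm:2} and with the bimodule formulas of Section \ref{sec:fock}.
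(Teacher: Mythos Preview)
Your proposal is correct and follows essentially the same route as the paper: define the map by $p^{(n)}\mapsto[\P^{(n)}]$, $q^{(n)}\mapsto[\Q^{(n)}]$, check the relations of Proposition~\ref{prop:3} against Theorem~\ref{thm:2} (the paper likewise derives relation~(2) from part~(3) of that theorem), and deduce injectivity from the action of $\tH$ on $\bigoplus_{n\ge0}\Se_n\mod$. The paper simply invokes ``the same argument as in \cite[Sec.~3.3]{Kh1}'' for this last step, whereas you spell out the identification with Schur $Q$-functions and are explicit about specializing the $\Z/2\Z$-grading at $q=1$; but the substance is identical.
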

\begin{proof}
Relations (1) and (3) from Proposition \ref{prop:3} follow immediately from Theorem \ref{thm:2}. Moreover, it is not hard to show that relation (2) from Proposition \ref{prop:3} follows from relation (3) in Theorem \ref{thm:2} after a little manipulation. This gives us a map $\th \rightarrow K_0(\tH)$ by taking $p^{(n)} \mapsto \P^{(n)}$ and $q^{(n)} \mapsto \Q^{(n)}$ and extending multiplicatively. 

The fact that this map is injective follows using the same argument as in \cite[Sec. 3.3]{Kh1} using the action of $\tH$ on $\oplus_{n \ge 0} \Se_n\mod$. 
\end{proof}

Note that, as in the untwisted case, we do not know surjectivity of the map from the Corollary above. 

\subsection{Complexes}

We work once again in the homotopy category $\Kom(\tH)$. We define 
\begin{equation} \label{eq:cpxC3} 
\C^-_i := \begin{cases}
\left( \dots \rightarrow \P^{(k)} \Q^{(i+k)} \{ k \} \rightarrow \dots \rightarrow \P \Q^{(i+1)} \{ 1 \} \rightarrow \Q^{(i)} \right) & \text{ if } i \ge 0 \\
\left( \dots \rightarrow \P^{(-i+k)} \Q^{(k)} \{k\} \rightarrow \dots \rightarrow \P^{(-i+1)} \Q \{1\} \rightarrow \P^{(-i)} \right) [-i]  & \text{ if } i \le 0
\end{cases}
\end{equation}
where the right most term is in cohomological degree zero. The differential is given by 
\begin{equation} \label{eq:diff2}
\begin{tikzpicture}[>=stealth]
\draw (9.75,0) rectangle (11.25,.5);
\draw (10.5,.25) node {$(k)$};
\draw (9.75,1.5) rectangle (11.25,2);
\draw (10.5,1.75) node {$(k-1)$};
\draw (10.5,.5) -- (10.5,1.5) [->][thick];
\draw (12.5,.5) -- (12.5,1.5) [<-][thick];
\draw (11.75,0) rectangle (13.25,.5);
\draw (12.5,.25) node {$(i+k)$};
\draw (11.5,1.5) rectangle (13.5,2);
\draw (12.5,1.75) node {$(i+k-1)$};
\draw (11,.5) arc (180:0:0.5cm)[->] [thick];
\draw [black] (11.8,.9) circle (2pt);
\end{tikzpicture}
\end{equation}
where there is one cap with a hollow dot. Notice that without this hollow dot the differential would not square to zero (with the dot it squares to zero using the argument in (\ref{eq:zero})). Likewise, we define
\begin{equation} \label{eq:cpxC4} 
\C^+_i := \begin{cases}
\left( \P^{(i)} \rightarrow \P^{(i+1)} \Q \{1\} \rightarrow \dots \rightarrow \P^{(i+k)} \Q^{(k)} \{k\} \rightarrow \dots \right) & \text{ if } i \ge 0 \\
\left( \Q^{(-i)} \rightarrow \P \Q^{(-i+1)} \{1\} \rightarrow \dots \rightarrow \P^{(k)} \Q^{(-i+k)} \{k\} \rightarrow \dots \right) [i] & \text{ if } i \le 0
\end{cases}
\end{equation}
where the left most term is in cohomological degree zero and the differential is given by
\begin{equation} \label{eq:diff3}
\begin{tikzpicture}[>=stealth]
\draw (9.5,0) rectangle (11.5,.5);
\draw (10.5,1.75) node {$(i+k)$};
\draw (9.75,1.5) rectangle (11.25,2);
\draw (10.5,.25) node {$(i+k-1)$};
\draw (10.5,.5) -- (10.5,1.5) [->][thick];
\draw (12.5,.5) -- (12.5,1.5) [<-][thick];
\draw (11.75,0) rectangle (13.25,.5);
\draw (12.5,1.75) node {$(k)$};
\draw (11.75,1.5) rectangle (13.25,2);
\draw (12.5,.25) node {$(k-1)$};
\draw (11,1.5) arc (-180:0:0.5cm)[<-] [thick];
\draw [black] (11.9,1.2) circle (2pt);
\end{tikzpicture}
\end{equation}

\subsection{A categorical action of $\tcl^t$}

We will now define an action of $\tcl^t$ on $\V_{Fock}$. 

\begin{conj}[Categorical twisted Boson-Fermion correspondence]\label{conj:2}\ \\
On $\Kom(\V_{Fock})$ define functors $\Phi_i(\v) := \C_{i}^-(\v)$ and $\Phi_i^*(\v) := \C_{i}^+(\v)$. These functors satisfy the following relations 
\begin{enumerate}
\item $\Phi_i \Phi_j \cong \begin{cases} \Phi_j \Phi_i [-1]\{1\} &\text{ if } i < j, i+j \ne 0 \\ \Phi_j \Phi_i [1]\{1\} & \text{ if } i > j, i+j \ne 0 \end{cases}$ while $\Phi_i^2 \cong 0$ if $i \ne 0$ and likewise if you replace $\Phi_i$ with $\Phi_i^*$ and $\Phi_j$ with $\Phi_j^*$
\item \label{R3t} $\Phi_i \Phi_j^* \cong \begin{cases} \Phi_j^* \Phi_i [1]\{1\} & \text{ if } i < j \\ \Phi_j^* \Phi_i [-1]\{1\} & \text{ if } i > j \end{cases}$ 
\item \label{R4t} $\Cone(\id \rightarrow \Phi_i^* \Phi_i) \cong \Cone(\Phi_i \Phi_i^* \rightarrow \id) \{1\}$ 
\end{enumerate}
\end{conj}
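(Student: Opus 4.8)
The plan is to mimic, in the twisted setting, the explicit computations carried out in Section~\ref{sec:exuntwisted} for Conjecture~\ref{conj:1}, using the cancellation Lemma~\ref{lem:cancel} as the main engine and Theorem~\ref{thm:2} as the source of all object-level decompositions. The three relations should follow a common template: tensor the relevant complexes $\C_i^{\pm}$ and $\C_j^{\pm}$, rewrite each bidegree piece using the commutation isomorphisms $\Q^{(n)}\P^{(m)}\cong \P^{(m)}\Q^{(n)}\bigoplus_{k\ge1}(\P^{(m-k)}\Q^{(n-k)}\{0,1\})$ and the row-multiplication rules $\P^{(m)}\P^{(n)}\cong\cdots$ from Theorem~\ref{thm:2} (and likewise for $\Q$'s), and then repeatedly apply Lemma~\ref{lem:cancel} to cancel matching summands until the surviving complex is manifestly a (possibly shifted) tensor product $\C_j^{\pm}\C_i^{\pm}$. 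The bookkeeping of $\Z/2\Z$-grading shifts $\{1\}$ is what produces the extra $\{1\}$ appearing throughout Conjecture~\ref{conj:2} compared to Conjecture~\ref{conj:1}: each pair of summands in Theorem~\ref{thm:2}(3),(4) comes with a shift partner, so every cancellation step either preserves or twists the grading in a controlled way.

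First I would treat $\Phi_i^2\cong 0$ for $i\ne0$. By Lemma~\ref{lem:idempotent} we may freely replace $(1^m)$-idempotents by $(m)$-idempotents, so $\Phi_i^2 = \C_i^-\C_i^-$ is a double complex whose $(a,b)$ term is $\P^{(a)}\Q^{(i+a)}\P^{(b)}\Q^{(i+b)}\{a+b\}$; expanding $\Q^{(i+a)}\P^{(b)}$ via Theorem~\ref{thm:2}(4) and then the row products gives a sum of terms $\P^{(\mu)}\Q^{(\nu)}$ with multiplicities coming in $\{0,1\}$-shifted pairs. The crucial local check — exactly as in Example~1 — is that the adjunction composite $\Q\xrightarrow{\adj I}\Q\P\Q\xrightarrow{I\adj}\Q$ (and its dotted/higher-strand analogues) is an isomorphism, so Lemma~\ref{lem:cancel} kills everything and leaves the zero complex; the hollow dot inserted in the differential (\ref{eq:diff2}) is exactly what makes these composites nonzero and the squaring-to-zero argument (\ref{eq:zero}) valid. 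The case $\Phi_i\Phi_j$ for $i\ne j$, $i+j\ne 0$ is the same computation done asymmetrically: the tensor product $\C_i^-\C_j^-$ and $\C_j^-\C_i^-$ have the same underlying terms after using Theorem~\ref{thm:2}, but the cohomological degree zero term sits in a different spot, which after full cancellation yields the shift $[\pm1]$; the uniform $\{1\}$ comes from the fact that in $\Se$-land the relevant identification of a module with a transposed/shifted partner (Lemmas~\ref{lem:idempotent}, \ref{lem:PP}) is only an isomorphism up to $\{1\}$. Relation~(\ref{R3t}) is the analogue of Example~2's mixed computation: expand $\C_i^-\C_j^+$ and $\C_j^+\C_i^-$, cancel the telescoping $\P\Q$-type summands via the $\P\Q\xrightarrow{I\adj I}\P\Q\P\Q\xrightarrow{II\adj}\P\Q$ identity, and compare the resulting complexes. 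For (\ref{R4t}) one computes $\Phi_i^*\Phi_i=\C_i^+\C_i^-$ and $\Phi_i\Phi_i^*=\C_i^-\C_i^+$ separately, simplifies each with Lemma~\ref{lem:cancel} down to short complexes of the shape in (\ref{calc:2})–(\ref{calc:3}), and reads off that the cones of $\id\to\Phi_i^*\Phi_i$ and $\Phi_i\Phi_i^*\to\id$ agree after a single $\{1\}$-shift; the $\{1\}$ replaces the cohomological $[1]$ of the untwisted case because here the ``extra'' surviving summand is $\P\P\Q^{(2)}$-type carrying a grading shift rather than a degree shift.

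The main obstacle I expect is not any single cancellation but organizing the combinatorics of \emph{which} summands cancel, with the right signs and $\Z/2\Z$-shifts, uniformly in $i,j$ and across the four sign cases $i\gtrless 0$, $j\gtrless 0$ built into the case-split definitions (\ref{eq:cpxC3})–(\ref{eq:cpxC4}) of $\C^{\pm}$. In the untwisted paper this is exactly the step that is only ``based on various calculations'' and left as Conjecture~\ref{conj:1}; here the Sergeev-algebra bimodule model of Section~\ref{sec:fock} should in principle let one verify each adjunction composite and each $\{1\}$-bookkeeping assertion by an explicit computation with $s_i$'s and $c_i$'s (as in the sample check of (\ref{eq:t2}) given there), but assembling these into a closed-form proof that the infinite tensor-product complexes collapse as claimed — rather than checking finitely many truncated examples as in Section~\ref{sec:exuntwisted} and the $\Q^n=0$ toy cases — is the genuinely hard part, and is why the statement is phrased as a conjecture whose proof ``should follow along the same lines as that from \cite{CL2}'' rather than being carried out in full.
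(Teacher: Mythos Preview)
The statement is a \emph{conjecture} in the paper, not a theorem: the paper offers no proof, only the illustrative truncated computations of Examples~3--6 (under hypotheses like $\Q^n=0$ for $n>1$ or $n>2$) carried out with Lemma~\ref{lem:cancel} and Theorem~\ref{thm:2}. Your proposal follows exactly this template and, crucially, you correctly identify in your final paragraph that the obstruction to a full proof is the uniform-in-$i,j$ combinatorial bookkeeping of cancellations and $\{0,1\}$-shifts, which is precisely why the paper leaves it conjectural; so your approach is the paper's approach, and your assessment of where it stops is accurate.

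One small inaccuracy worth flagging: your explanation of relation~(\ref{R4t}) says ``the $\{1\}$ replaces the cohomological $[1]$ of the untwisted case,'' but the two statements are not parallel in that way. In the untwisted Conjecture~\ref{conj:1} one has a distinguished triangle $\Psi_i\Psi_i^*\to\id\to\Psi_i^*\Psi_i$, i.e.\ $\Cone(\id\to\Psi_i^*\Psi_i)\cong\Psi_i\Psi_i^*[1]$, with \emph{no} cone on the left-hand side; in the twisted version both sides are cones, and Example~\ref{sec:ex4} shows they simplify to the \emph{same} two-term complex $[\P\Q\to\id]$ differing only by $\{1\}$. So the $\{1\}$ is not a substitute for the $[1]$ but rather the residual grading discrepancy after both cones have been simplified to isomorphic complexes --- a genuinely different phenomenon from the untwisted triangle.
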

\begin{remark}
It is easy to see that $\Phi_i^*$ is biadjoint to $\Phi_i$ just because $\P$ is biadjoint to $\Q$. The two maps that appear in the second relation above are the corresponding adjunctions maps. 
\end{remark}

As in the untwisted case it is easy to see that $\Phi_i^*$ is both left and right adjoint to $\Phi_i$. Notice that at the level of Grothendieck theory, complexes $\C_i^-$ and $\C_{-i}^+$ are equal (up to sign). Thus the action of $\tcl^t$ on Grothendieck groups descends to an action of $\cl^t$. 

The relations above are categorical analogues only of some of the relations in (\ref{eq:tcl2}). For instance, we have $\phi_0^2 = 1$ at the decategorified level but we do not have an analogous relation for $\Phi_0^2$. This is because $\Phi_0^2$ does not have an obvious simplification (in particular, $\Phi_0^2 \not\cong \id$ as illustrated in example \ref{sec:ex6} below). It is tempting to try to figure out these missing relations. 

\subsection{Examples: twisted case}\label{sec:extwisted}

We finish by illustrating a couple of computations.

\subsubsection{Example 3}
Suppose that $Q^n=0$ for $ n \ge 1$. Then $ \Phi_{-2} \Phi_0 \cong \P^{(2)}[2]$.  On the other hand
\begin{align}
\nonumber \Phi_{0} \Phi_{-2}
&\cong [\P^{(2)} \Q^{(2)}  \rightarrow \P\Q \{1 \} \rightarrow \id][\P^{(2)}][2] \\
\label{calc:11} &\cong \left[\P^{(2)} \oplus \P^{(2)} \{1\} \rightarrow \P \P \{1\} \oplus \P\P \rightarrow \P^{(2)} \right][2].
\end{align} 
One can check carefully that the first map in (\ref{calc:11}) maps $\P^{(2)} \oplus \P^{(2)} \{1\}$ isomorphically into $ \P\P\{1\}$. Using the cancellation Lemma ~\ref{lem:cancel} the complex (\ref{calc:11}) is homotopically equivalent to $\P\P \rightarrow \P^{(2)}$. This map is induced by the second map in (\ref{calc:11}) and one can again check that it is a projection. Thus $\Phi_0 \Phi_{-2} \cong \P^{(2)} [3]\{1\}$ which means that $ \Phi_0 \Phi_{-2} \cong \Phi_{-2} \Phi_0 [1]\{1\}$.

\subsubsection{Example 4}\label{sec:ex4} Assume again that $\Q^n = 0$ for $n > 1$ so that
\begin{align}
\nonumber \Phi_{-1}^* \Phi_{-1}
&\cong [\Q \rightarrow \P \Q^{(2)} \{1\}][\P^{(2)} \Q \{1\} \rightarrow \P] \\
\nonumber &\cong \left[ \Q \P^{(2)} \Q \{1\} \rightarrow \Q \P \oplus \P \Q^{(2)} \P^{(2)} \Q \rightarrow \P \Q^{(2)} \P \{1\} \right] \\
\label{calc:4}&\cong \left[ \P \Q \{0,1\} \rightarrow \P \Q \oplus \id \{0,1\} \oplus \P \Q \{0,1\} \rightarrow \P \Q \{0,1\} \right].
\end{align}

Claim: the right hand map in (\ref{calc:4}) has rank two, allowing us to cancel out four copies of $\P \Q$. In light of non-example 3 this claim is not so easily believable. Nevertheless, a careful computation shows that the matrix representing this map is diagonal, with isomorphisms on the diagonal, and hence has rank two. 

This leaves us with $[\P \Q \{0,1\} \rightarrow \P \Q \oplus \id \{0,1\}]$. We can cancel one more pair of $\P \Q$ on the left to leave us with $\P \Q \{1\} \rightarrow \id \{0,1\}$. Thus 
\begin{equation}\label{calc:6}
\Cone(\id \rightarrow \Phi_{-1}^* \Phi_{-1}) \cong [\P \Q \{1\} \rightarrow \id \{1\}].
\end{equation}
On the other hand, $\Phi_{-1} \Phi_{-1}^* \cong \P \Q$. Thus $\Cone(\Phi_{-1} \Phi_{-1}^* \rightarrow \id) \cong [\P \Q \rightarrow \id]$. Comparing this with (\ref{calc:6}) gives us relation (\ref{R4t}) from Conjecture \ref{conj:2}. 

\subsubsection{Example 5}\label{sec:ex5} Assume that $ \Q^n = 0$ for $n \geq 1$ so that
\begin{align}
\nonumber \Phi_{-1} \Phi_{-1}
&\cong [\P^{(2)} \Q \{1\} \rightarrow \P][\P] \\
\label{calc:7} &\cong \left[\P^{(2)} \{1\} \oplus \P^{(2)} \rightarrow \P \P \right].
\end{align}
The map in (\ref{calc:7}) is $\begin{pmatrix} C_2 \iota & -\iota \end{pmatrix}$ where $ \iota $ is the inclusion $ \P^{(2)} \rightarrow \P\P $ and $ C_1 $ and $ C_2$ are hollow dots on the left and right upward pointing strands respectively. 
This map is invertible, with inverse $\P \P \rightarrow \P^{(2)} \{1\} \oplus \P^{(2)}$ given by
\begin{equation*}
\begin{pmatrix}
\pi C_2 - \pi C_1 \\
\pi C_1 C_2 - \pi
\end{pmatrix}
\end{equation*}
where $ \pi \colon \P \P \rightarrow \P^{(2)} $ is the projection map. Thus $ \Phi_{-1} \Phi_{-1} \cong 0$.

\subsubsection{Example 6}\label{sec:ex6} Suppose $\Q^n=0$ for $n>1$. Then 
\begin{align}
\Phi_0 \Phi_0
\nonumber &\cong [\P \Q \{1\} \rightarrow \id][\P \Q \{1\} \rightarrow \id] \\
\nonumber &\cong [\P \Q \P \Q \rightarrow \P \Q \{1,1\} \rightarrow \id] \\
\label{calc:5} &\cong [\P \Q \{0,1\} \rightarrow \P \Q \{1,1\} \rightarrow \id].
\end{align}
It is not hard to check that one can cancel out two copies of $\P \Q \{1\}$ above, leaving us with $\P \Q \rightarrow \P \Q \{1\} \rightarrow \id$. However, at this point the first map cannot be an isomorphism since, if it were, the composition with the second map would be nonzero (contradicting the fact that (\ref{calc:5}) is a complex). Thus $\Phi_0^2$ is not isomorphic to $\id$ (although it is equal to $\id$ at the level of Grothendieck groups). 

As this example illustrates, when applying the cancellation Lemma \ref{lem:cancel} one has to be careful. The first map in (\ref{calc:5}) is an isomorphism between any choices of summands on the left and the right. However, after applying Lemma \ref{lem:cancel}, the remaining map becomes zero ({\it i.e.} the map $A-BD^{-1}C$ from Lemma \ref{lem:cancel} is zero). An equivalent way of saying this is that the rank of the left map in (\ref{calc:5}) is one (rather than two). This can be checked by direct computation rather than the indirect argument given above.

\section{Further questions and remarks}\label{sec:questions}

We finish with some general remarks and open questions. 

\subsection{A categorical version of the Clifford algebra}

Let us restrict our attention to the untwisted case (one can also pose the same questions about the twisted case). The discussion and conjectures above ignore what is perhaps the most interesting aspect of this story, the natural transformations between the $\Psi$'s. More precisely, the Heisenberg category $\H$ is equipped with a collection of morphisms described diagrammatically in section \ref{sec:H}. What is the analogue of these maps for the Clifford algebra?

An answer to this question involves a category $\Cl$ with objects (or 1-morphisms) generated by $\Psi_i$'s and a collection of morphisms (or 2-morphisms) between them. This collection of maps should induce the relations described in Conjecture \ref{conj:1}. In fact, $\Cl$ should have the structure of a dg-category. For instance, as observed in \cite{Kh2} in a different context, the relation $\Psi_i^2 \cong 0$ should be encoded by the existence of a map $T_{ii}: \Psi_i \Psi_i \rightarrow \Psi_i \Psi_i$ whose differential is equal to the identity.

With the right definition of $\Cl$, Conjecture \ref{conj:1} should then be lifted to a functor $\Cl \rightarrow \H$ where $\Psi_i$ and $\Psi_i^*$ are mapped to the complexes as in Conjecture \ref{conj:1}. In principle, it should be possible to guess the 2-morphisms in $\Cl$ by studying the maps between the various complexes (\ref{eq:cpxC1}) and (\ref{eq:cpxC2}) in $\Kom(\H)$. For instance, one should be able to actually see a map $T_{ii}$ mentioned above. 

Unfortunately, this is hard since it is difficult to compute all the maps between various compositions of complexes such as those (\ref{eq:cpxC1}) and (\ref{eq:cpxC2}). Alternatively, one may try to guess the right definition of $\Cl$ and then justify this guess by defining the functor $\Cl \rightarrow \H$. In the literature there are some categorifications of Clifford algebras, such as that discussed in \cite{Ti}. Unfortunately, the presentation of the Clifford algebra used in \cite{Ti} is not the one in the current paper and we do not currently understand a sensible relationship between the categorification in \cite{Ti} and the hoped for dg-category $\Cl$. 

Finally, one should note that the Boson-Fermion correspondence also recovers the action of the Heisenberg from that of the Clifford. We have not addressed this side of the correspondence because to do this we would need to know the whole structure of $\Cl$. This part of the correspondence should give a functor $\H \rightarrow \Cl$. 

\subsection{Semi-simplicity and the Fock space}

The categories $\k[S_n]\mod$ used to define $\V_{Fock}$ (in the untwisted case) are semi-simple. In some ways this is unfortunate because it means that the complexes (\ref{eq:cpxC1}) and (\ref{eq:cpxC2}) used to define $\Psi_i$ and $\Psi_i^*$ must split ({\it i.e.} they are not really complexes and hence less interesting). We address this issue with two remarks. 

Firstly, even if these complexes split it is still interesting to wonder what $(\k[S_m],\k[S_n])$-bimodule they correspond to (in some sense this is done in \cite{J} in terms of characters). Whatever the answer may be it is noteworthy that it can be expressed naturally in terms of these categorical vertex operators.

Secondly, there are other versions of the categorical Fock space $\V_{Fock}$ which may not be semi-simple. For instance, instead of $\V_{Fock}$ one may take $\H$ quotiented by the ideal generated by objects 
of the form $\X \Q$ where $\X$ is an arbitrary object of $\H$. Note that the objects here are products of $\P$'s (just like in $\V_{Fock}$) but this category is actually larger than $\V_{Fock}$ since it contains more morphisms. This category is actually the universal categorical Fock space in the sense that it maps to any other categorification of Fock space. In this case our conjectures still apply but the complexes such as in (\ref{eq:cpxC1}) and (\ref{eq:cpxC2}), which define $\Psi_i$ and $\Psi_i^*$, do not necessarily split (as far as we know). 

\subsection{The affine Sergeev algebra}

Inside $\H$ or $\tH$ consider the element $X_i \in \End(\P^n)$, acting on the $i$th factor $\P$, as illustrated in the left hand side of (\ref{eq:curl}). In \cite{Kh1} this element was studied and it was encoded diagrammatically by a solid dot, as shown on the right side of (\ref{eq:curl}). 
\begin{equation}\label{eq:curl}
\begin{tikzpicture}[>=stealth]
\draw  (1.9,0) .. controls (1.9,.5) and (1.3,.5) .. (1.1,0) [thick];
\draw  (1.9,0) .. controls (1.9,-.5) and (1.3,-.5) .. (1.1,0) [thick];
\draw  (1,-1) .. controls (1,-.5) .. (1.1,0) [thick];
\draw  (1.1,0) .. controls (1,.5) .. (1,1) [->] [thick];
\draw  (2.5,0) node {$=$};
\draw (3,-1) -- (3,1) [thick][->];
\filldraw [black] (3,0) circle (2pt);
\end{tikzpicture}
\end{equation}
In \cite{Kh1} it was shown that these $X_i$'s together with the symmetric group $\k[S_n] \subset \End(\P^n)$ generate a copy of the degenerate affine Hecke algebra. In the twisted case, it turns out that these elements, together with $\Se_n \subset \End(\P^n)$ generate a copy of a version of the affine Sergeev algebra (which was introduced in \cite{Ser}).   
More precisely, denote a crossing of the $i$th and $(i+1)$st strands by $T_i$ and let a hollow dot on the $i$th strand be denoted by $C_i$.  
\begin{prop}\label{prop:sergeevacting}
For the twisted Heisenberg $\tH$, the following relations hold inside $\End(\P^n)$: 
\begin{align*}
T_i X_i &= X_{i+1}T_i + 1 + C_iC_{i+1} \\
X_i T_i &= T_i X_{i+1} + 1 - C_i C_{i+1} \\
C_i X_j &= (-1)^{\delta_{i,j}} X_j C_i \\
X_i X_j &= X_j X_i.
\end{align*}
\end{prop}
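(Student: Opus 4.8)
The plan is to verify each of the four relations in $\End(\P^n)$ directly via diagrammatic calculation, using the defining relations of $\tH$ from section \ref{sec:tH}. The key observation is that $X_i$ is defined as the curl in (\ref{eq:curl}), so all four identities reduce to manipulating diagrams built from crossings, caps, cups, and hollow dots subject to relations (\ref{eq:rel1}), (\ref{eq:rel2B}), (\ref{eq:rel3}), and (\ref{eq:t1})--(\ref{eq:t5}). Since each relation involves only two adjacent strands (indices $i$, $i+1$, $j$), by locality it suffices to work with $\End(\P^2)$ or $\End(\P^3)$, isolating the relevant strands.

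First I would establish the last two relations, which are the easy ones. The identity $C_i X_j = (-1)^{\delta_{i,j}} X_j C_i$ splits into two cases: when $i \ne j$ the strands are distinct and a hollow dot on one strand slides freely past a curl on another by planar isotopy plus relation (\ref{eq:t5}) (dots far apart supercommute), giving a sign $+1$ since sliding a single hollow dot past a curl—which carries no hollow dots on the relevant portion—produces no sign. When $i = j$ one must slide a hollow dot through a curl on the same strand; using relations (\ref{eq:t2}) and (\ref{eq:t3}) (a hollow dot passing through a cap or cup picks up a sign), one picks up $-1$, yielding the factor $(-1)^{\delta_{i,j}}$. The relation $X_i X_j = X_j X_i$ for $i \ne j$ is immediate by locality/planar isotopy, and the nontrivial content (that curls on the same strand commute) follows as in \cite{Kh1} from the degenerate affine Hecke structure; in any case it reduces to an isotopy argument once one resolves curls using relation (\ref{eq:rel2B}).

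The heart of the proof is the first two relations, $T_i X_i = X_{i+1} T_i + 1 + C_i C_{i+1}$ and $X_i T_i = T_i X_{i+1} + 1 - C_i C_{i+1}$. The plan is to follow Khovanov's argument from \cite{Kh1} verbatim, but with the left-hand relation of (\ref{eq:rel2}) replaced by its twisted version (\ref{eq:rel2B}), which carries the extra hollow-dot correction term. Concretely, one draws $T_i X_i$—a crossing with a curl on the lower $i$th strand—pulls the curl through the crossing using relation (\ref{eq:rel1}) (the ``pitchfork'' moves), and then the curl becomes a configuration of a cap-cup pair on strands $i$ and $i+1$; applying (\ref{eq:rel2B}) to resolve this produces three terms: $X_{i+1}T_i$ (the curl now on the upper $(i+1)$st strand), a term $+1$ from the first correction, and a term $C_i C_{i+1}$ from the hollow-dot correction in (\ref{eq:rel2B}), using relations (\ref{eq:t2})--(\ref{eq:t3}) to convert the dots on the cap/cup into hollow dots $C_i$, $C_{i+1}$ on the strands. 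The second relation $X_i T_i = T_i X_{i+1} + 1 - C_iC_{i+1}$ is obtained the same way reading the diagram from the other side (or by taking an adjoint/reflection), where the sign of the hollow-dot term flips because of the orientation of the cap versus the cup in (\ref{eq:rel2B}) and the sign rules in (\ref{eq:t2}) versus (\ref{eq:t3}).

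The main obstacle I anticipate is bookkeeping the signs: the hollow dots are odd morphisms in the $\Z/2\Z$-graded setting, so every time a hollow dot slides past a crossing, a cap, a cup, or another hollow dot one must track a Koszul sign, and the relations (\ref{eq:t1})--(\ref{eq:t5}) encode these signs asymmetrically (e.g.\ two hollow dots on an upward strand give $+1$ because of the opposite-superalgebra convention noted in section \ref{sec:sergeev}). Getting the $+C_iC_{i+1}$ versus $-C_iC_{i+1}$ correct, and confirming that the ``$+1$'' terms do not themselves acquire signs, is where a careless computation would go wrong; I would pin this down by drawing the diagrams explicitly for $n=2$ and checking against the action on $\Se_2$-modules described in section \ref{sec:fock}, where $X_i$ acts by a Jucys--Murphy-type element and $T_i$, $C_i$ act by $s_i$, $c_i$, so that the claimed identities become the known defining relations of the affine Sergeev algebra from \cite{Ser}.
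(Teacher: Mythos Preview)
Your proposal is correct and follows exactly the approach the paper indicates: direct diagrammatic calculation in $\tH$, paralleling Khovanov's argument in \cite{Kh1} for $\H$ but with the twisted relation (\ref{eq:rel2B}) supplying the extra $C_iC_{i+1}$ correction terms. Your identification of the sign-tracking with the odd hollow dots as the only delicate point is accurate, and your suggested sanity check against the $\Se_n$-module action from section \ref{sec:fock} is a sensible way to confirm the signs.
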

\begin{proof}
These are proved by direct diagrammatic calculations (as in \cite{Kh1} for the case of $\H$). 
\end{proof}
Iterating these relations one also finds that 
\begin{align*}
T_i X_i^k &= X_{i+1}^kT_i + \sum_{j=0}^{k-1} X_i^j X_{i+1}^{k-1-j}  + \sum_{j=0}^{k-1} (-1)^{j} X_i^j X_{i+1}^{k-1-j} C_iC_{i+1} \\
X_i^k T_i &= T_i X_{i+1}^k + \sum_{j=0}^{k-1} X_i^j X_{i+1}^{k-1-j}  + \sum_{j=0}^{k-1} (-1)^{k-j} X_i^j X_{i+1}^{k-1-j} C_iC_{i+1}.
\end{align*}
This version of the affine Sergeev algebra acting on $\End(\P^n) $ is related by an anti-isomorphism to the algebra considered in ~\cite{HKS}.


\begin{thebibliography}{E-G-S}

\bibitem[An]{An} 
I.~Anguelova, Boson-Fermion correspondence of type B and twisted vertex algebras; \textsf{arXiv:1112.3913}.

\bibitem[BK]{BK}
J.~Brundan and A.~Kleshchev, Hecke-Clifford superalgebras, crystals of type $A_{2l}^{(2)} $ and the modular branching rules for $ \widehat{S}_n$, \textit{Represent. Theory} {\bf 5} (2001), 317--403.

\bibitem[CL1]{CL1} 
S.~Cautis and A.~Licata, Heisenberg categorification and Hilbert schemes, \textit{Duke Math. Journal} \textbf{161} (2012), 2469--2547; \textsf{arXiv:1009.5147}.

\bibitem[CL2]{CL2} 
S.~Cautis and A.~Licata, Vertex operators and 2-representations of quantum affine algebras, \textsf{arXiv:1112.6189}.

\bibitem[CLS]{CLS}
S.~Cautis, A.~Licata and J.~Sussan, Braid group actions via categorified Heisenberg complexes, \textit{Compositio Math.} {\bf 150} (2014), 105--142; \textsf{arXiv:1207.5245}.

\bibitem[DJKM]{DJKM}
E.~Date, M.~Jimbo, M.~Kashiwara and T.~Miwa,
Transformation groups for soliton equations. IV. A new hierarchy of soliton equations of KP-type, \textit{Phys. D} \textbf{4}(3) (1981), 343--365.

\bibitem[F]{F}
I.~Frenkel, Two constructions of affine Lie algebra representations and boson-fermion correspondence in quantum field theory, \textit{Journal of Functional Analysis} \textbf{44} (3) (1981), 259--327.

\bibitem[FK]{FK}
I.~Frenkel and V.~Kac, Basic representations of affine Lie algebras and dual resonance models, \textit{Invent. Math.} \textbf{62} (1980), 23--66.

\bibitem[FLM]{FLM} 
I.~Frenkel, J.~Lepowsky and A.~Meurman, Vertex operator algebras and the monster, vol.~134 of \emph{Pure and Applied Math.}, Academic Press, 1988.

\bibitem[FPS]{FPS}
I.~Frenkel, I.~Penkov and V.~Serganova, Categorification of the Boson-Fermion correspondence via representation theory of $\sl_{\infty}$; {\sf arXiv:1405.7553}. 

\bibitem[HKS]{HKS}
D.~Hill, J.~Kujawa and J.~Sussan, Affine Hecke-Clifford algebras and type Q Lie superalgebras, \textit{Math. Z.} {\bf 268} (2011), 1091--1158.

\bibitem[HS]{HS} 
D.~Hill and J.~Sussan, A categorification of twisted Heisenberg algebras, \textsf{arXiv:1501.00283}.

\bibitem[J]{J}
N. Jing, Vertex operators, symmetric functions and the spin group $\Gamma_n$, \textit{J. Algebra}, {\bf 138} (1991), 340--398. 

\bibitem[Kh1]{Kh1} 
M.~Khovanov, Heisenberg algebra and a graphical calculus, \textsf{arXiv:1009.3295}.

\bibitem[Kh2]{Kh2}
M.~Khovanov, How to categorify one-half of quantum ${\mathfrak{gl}}(1|2)$; \textsf{arXiv:1007.3517}.

\bibitem[N]{N}
M.~Nazarov, Young's symmetrizers for projective representations of the symmetric group, \textit{Advances in Math.} {\bf 127} (1997), 190--257.

\bibitem[Seg]{Seg}
G.~Segal, Unitary representations of some infinite dimensional groups, \textit{Comm. Math. Phys.} \textbf{80} (1981), 301--342.

\bibitem[Ser]{Ser} 
A.~Sergeev, The Howe duality and the projective representations of symmetric groups, \textit{Represent. Theory}, {\bf 3} (1999), no. 1, 416--434.

\bibitem[Ti]{Ti} 
Y.~Tian, A diagrammatic categorification of a Clifford algebra, \textsf{arXiv:1309.6049}.

\bibitem[WW]{WW}
J.~Wang and W.~Wang, Lectures on spin representation theory of symmetric groups, \textit{Bulletin of Inst. of Math. Academia Sinica}, {\bf 7} (2012), 91--164.


\end{thebibliography}
\end{document}